\def\nst2{\| _*} 
\def\a12{A_h ^{1/2} } 
\def\tr|{|\! |\! |}
\def\R {{\mathbb R}}
\def\E{{\mathcal{E}}}
\def \a{\alpha }
\def\T_h{{{\mathcal T}_h}}
\def\<{{\langle }}
\def\>{{\rangle }}
\def\S{{\mathcal{S}}}
\DeclarePairedDelimiter{\jumpop}{ \llbracket }{ \rrbracket}
\DeclareSymbolFont{matha}{OML}{txmi}{m}{it}
\DeclareMathSymbol{\varv}{\mathbf}{matha}{118}
\newcommand{\wv}{\widehat{v}}
\newcommand{\rj}{\mathrm{j}}
\newcommand{\rb}{\mathrm{b}}
\theoremstyle{definition}
\newcommand\restr[2]{{
  \left.\kern-\nulldelimiterspace 
  #1 
  \vphantom{\big|} 
  \right|_{#2} 
  }}
\NewDocumentCommand{\dgal}{sO{}m}{%
  \IfBooleanTF{#1}
    {\dgalext{#3}}
    {\dgalx[#2]{#3}}%
}
\NewDocumentCommand{\dgalext}{m}{%
  \sbox0{%
    \mathsurround=0pt 
    $\left\{\vphantom{#1}\right.\kern-\nulldelimiterspace$%
  }%
  \sbox2{\{}%
  \ifdim\ht0=\ht2
    \{\kern-.625\wd2 \{#1\}\kern-.625\wd2 \}%
  \else
    \left\{\kern-.7\wd0\left\{#1\right\}\kern-.7\wd0\right\}%
  \fi
}
\NewDocumentCommand{\dgalx}{om}{%
  \sbox0{\mathsurround=0pt$#1\{$}%
  \sbox2{\{}%
  \ifdim\ht0=\ht2
    \{\kern-.625\wd2 \{#2\}\kern-.625\wd2 \}%
  \else
    \mathopen{#1\{\kern-.7\wd0 #1\{}
    #2
    \mathclose{#1\}\kern-.7\wd0 #1\}}
  \fi
}
\renewcommand{\R}{{\mathbb{R}}}
\renewcommand{\T}{\mathbb{T}^d}
\begin{document}
\theoremstyle{plain}
\newtheorem{theorem}{Theorem}[section]
\newtheorem{lemma}{Lemma}[section]
\newtheorem{proposition}{Proposition}[section]
\newtheorem{corollary}{Corollary}[section]
\newtheorem{problem}{Problem}[section]

\theoremstyle{definition}
\newtheorem{definition}[theorem]{Definition}

\newtheorem{example}[theorem]{Example}

\newtheorem{remark}{Remark}[section]
\newtheorem{remarks}[remark]{Remarks}
\newtheorem{note}{Note}
\newtheorem{case}{Case}

\newcommand{\tristan}[1]{%
  \par\noindent%
  \begingroup
  \color{orange}%
  \textbf{Tristan says:}~#1%
  \par\endgroup
}

\numberwithin{equation}{section}
\numberwithin{table}{section}





\title[PINNS - DG methods]
{PINN-DG: Residual neural network methods  trained with Finite Elements}

\author[Georgios Grekas]{Georgios Grekas}
\address{
Computer, Electrical, Mathematical Sciences \& Engineering Division, King Abdullah University of Science and Technology (KAUST), Thuwal, Saudi Arabia} 
\email {\href{mailto:grekas.g@gmail.com}{grekas.g{\it @\,}gmail.com}}

\author[Charalambos G. Makridakis]{Charalambos G. Makridakis}
\address{
Institute of Applied and Computational Mathematics, 
FORTH, 700$\,$13 Heraklion, Crete, Greece, and MPS, University of  Sussex, Brighton BN1 9QH, United Kingdom} 
\email {\href{mailto:C.G.Makridakis@iacm.forth.gr}{C.G.Makridakis{\it @\,}iacm.forth.gr}}

\author[Tristan Pryer]{Tristan Pryer}
\address{Department of Mathematical Sciences, University of Bath, Claverton Down, Bath BA2 7AY, UK}
\email {\href{mailto:tmp38@bath.ac.uk}{tmp38{\it @\,}bath.ac.uk}} 

\date{\today}

\subjclass[2010]{65M15, 65M12.}

\begin{abstract}
Over the past few years, neural network methods have evolved in various directions for approximating partial differential equations (PDEs). A promising new development is the integration of neural networks with classical numerical techniques such as finite elements and finite differences. In this paper, we introduce a new class of Physics-Informed Neural Networks (PINNs) trained using discontinuous Galerkin finite element methods. Unlike standard collocation-based PINNs that rely on pointwise gradient evaluations and Monte Carlo quadrature, our approach computes the loss functional using finite element interpolation and integration. This avoids costly pointwise derivative computations, particularly advantageous for elliptic PDEs requiring second-order derivatives, and inherits key stability and accuracy benefits from the finite element framework. We present a convergence analysis based on variational arguments and support our theoretical findings with numerical experiments that demonstrate improved efficiency and robustness.
\end{abstract}

\maketitle




\section{Introduction and method formulation}\label{Se:1}

\subsection*{The model problem}\label{SSe:1.1}

In recent years, neural network methods have been developed in a variety of directions for approximating partial differential equations (PDEs). A particularly promising line of work involves integrating neural networks with established numerical methods such as finite element and finite difference schemes. The goal is to develop hybrid techniques that combine the strengths of both paradigms. 

In this paper, we propose a novel formulation of Physics-Informed Neural Networks (PINNs) that incorporates finite element spaces into the training process. Specifically, rather than using standard quadrature rules that lead to collocation-based loss evaluation, we adopt a finite element-based approach that enables efficient computation of otherwise intractable loss functionals. This strategy eliminates the need for pointwise gradient evaluations, particularly advantageous in elliptic problems, where second-order derivatives are required, and inherits several desirable features of the finite element method, stability, flexibility, and scalability in complex geometries.

To illustrate the method, we begin with the classical Poisson problem:
\begin{equation}\label{ivp-AC}
\left \{
\begin{alignedat}{3}
&-\varDelta u  = f\quad &&\text{in}\,\,&& \Omega \\[2pt]
&\quad  u =0 \quad &&\text{on}\,\,&&\partial\Omega, \\[2pt]
\end{alignedat}
\right. 
\end{equation}
where $\Omega \subset \R^d$, for $d=1,2,3$.

We define the associated energy functional
\begin{align}\label{energy_fun}
\mathcal{E}[u] = \int_\Omega \left| \varDelta u + f \right|^2 dx,
\end{align}
and consider weak solutions of \eqref{ivp-AC} as minimisers of $\mathcal{E}$ over the space 
$V(\Omega) = \{ w \in H^1_0(\Omega) : \varDelta w \in L^2(\Omega) \}$. This variational characterisation motivates a PINN approach, where one seeks minimisers of \eqref{energy_fun} over a neural network ansatz space.

Here we consider functions $u_\theta$ defined through neural networks and we are interested in approximating minimisers of 
\eqref{energy_fun} training the network parameters which are represented collectively by the index $\theta$.
Choosing regular enough activation functions, a discretized version of the energy functional in \eqref{energy_fun} is minimised. In fact,
the standard approach  \emph{training through Monte-Carlo quadrature} yielding a collocation type loss functional: 
To be more precise we    consider a collection $ X_1,X_2,\ldots$ of i.i.d.\ $\varOmega$-valued random variables, defined on an appropriate probability space 
representing a random choice of points in $\varOmega\, .$
If we approximate the energy functional by Monte Carlo integration we shall need to evaluate the integrant at random points  $X_i(\omega )\in \varOmega,$
 $\omega $   being a fixed instance:  
 \begin{equation}\label{prob_E}
\mathcal{E}_{N, \omega }( u )  =  \frac 1N  \sum _{i=1 } ^N    \, \left| ( \varDelta u + f )\, (X_i(\omega)\right|^2   \, .  \end{equation}
For the sake of simplicity in the exposition, we disregard the boundary terms that are typically included in a weak manner in the loss functional.
Standard PINN methods rely on the minimization of functionals of the form \eqref{prob_E} over parametrized neural network spaces of a given architecture.
Point evaluations of the operator at the random points are particularly straightforward and yield a collocation-type loss. The primary advantage of such training methods is that they scale reasonably in high dimensions. However, collocation-type methods admit less natural discrete representations, and their implementation requires computationally demanding algorithms such as back-propagation for each operator evaluation. Our objective in this work is to propose alternative training methods that utilise finite elements to approximate the continuous loss. 

\subsubsection*{Finite Elements and Neural Network Functions}
We adopt standard notation for finite element functions defined on a triangulation $T_h$ of a domain $\varOmega$, with local mesh size denoted by $h = h(x)$. For each $K \in T_h$, let $\mathbb{P}_q(K)$ denote the set of polynomials of degree at most $q$. We define the $C^1$-conforming finite element space
\begin{equation}\label{Sh_def_arg}
 \S_{h, 1}(\varOmega) := \left\{ v \in C^1(\bar \varOmega) \,:\, \restr{v}{K} \in \mathbb{P}_q(K), \, K \in T_h \right\}.
\end{equation}
These spaces can be realised, for example, using Argyris elements and satisfy $\S_{h,1}(\varOmega) \subset H^2(\varOmega)$; see \cite[Chapter 3]{BrennerSFE}. We also consider the simpler $C^0$-conforming spaces of continuous piecewise polynomials,
\begin{equation}\label{Sh_def_C0}
 \S_{h, 0}(\varOmega) := \left\{ v \in C^0(\bar \varOmega) \,:\, \restr{v}{K} \in \mathbb{P}_q(K), \, K \in T_h \right\}.
\end{equation}

For a given space $\S_{h,1}(\varOmega)$, the interpolation operator $I_{\S_{h,1}} : C^1(\bar \varOmega) \to \S_{h,1}(\varOmega)$ is defined using the standard basis associated with the finite element space. When the target function $v$ is sufficiently smooth, we may use this interpolant to define a discrete version of the energy functional $\E(v)$ by
\begin{equation}
\label{E_hFE}
\mathcal{E}_{\S_{h,1}}(v) = \int_{\varOmega} \left| \varDelta I_{\S_{h,1}} v + I_{\S_{h,1}} f \right|^2 \, \mathrm{d}x.
\end{equation}
Minimisers of $\mathcal{E}_{\S_{h,1}}(v)$ will be sought over neural network functions $v = v_\theta$, where $\theta$ denotes the network parameters.

This approach, training neural networks via finite element interpolation, has been recently proposed in \cite{GrekasM_2025}. The idea is to evaluate the loss functional by interpolating the network output into a conforming finite element space, typically a subspace of $H^2(\Omega)$. However, implementing $C^1$-conforming spaces such as $\S_{h,1}(\varOmega)$ can be challenging in practice, especially in three dimensions.

The methods proposed in this work employ the Discontinuous Galerkin framework to approximate the continuous energy functional $\mathcal{E}$. For ease of implementation, we adopt finite element spaces with lower regularity, specifically the standard $C^0$ finite element space $\S_{h,0}(\varOmega)$. An interpolation operator $I_{\S_{h,0}} : C^0(\bar\varOmega) \to \S_{h,0}(\varOmega)$ is used to define the discrete energy functional:
\begin{equation}
\begin{aligned}\label{E_h_intro}
\mathcal{E}_{\S_{h,0}}(v) &= \sum_{K \in T_h} \int_K \left| \varDelta I_{\S_{h,0}} v + I_{\S_{h,0}} f \right|^2 \, dx 
\\
&\qquad + \operatorname{cons}(I_{\S_{h,0}} v, f) + \alpha\, \operatorname{pen}(I_{\S_{h,0}} v, f).
\end{aligned}
\end{equation}
The additional terms $\operatorname{cons}(I_{\S_{h,0}} v, f)$ and $\operatorname{pen}(I_{\S_{h,0}} v, f)$ denote consistency and penalty contributions, respectively. These typically involve jumps and averages across element interfaces, as is standard in Discontinuous Galerkin methods. Precise definitions are given in \eqref{E_h}. For related formulations in the finite element literature, see e.g.~\cite{brenner2005c, engel2002continuous, makridakis2014atomistic, pryer2014discontinuous, grekas2022approximations}.

\subsection*{Contribution, results and bibliography}
\label{sec:G_convergence}

\subsubsection*{\it DG-PINNs.} 
We propose a novel approach that connects Discontinuous Galerkin finite elements with Physics-Informed Neural Networks through \emph{training}, by employing finite element spaces to evaluate the integrals appearing in the loss functional. 

Residual-based loss formulations involve the differential operator directly, rather than its variational form, and thus require higher regularity. However, standard $C^0$ finite element functions typically have discontinuous gradients. To ensure consistency and stability, we introduce appropriate gradient jump terms across element interfaces, following standard techniques in the Discontinuous Galerkin framework~\cite{brenner2005c}.

While previous approaches have typically relied on quadrature- or Monte Carlo-based collocation methods, our method preserves the simplicity of standard neural network formulations for PDEs while incorporating well-established finite element tools (and software) for loss computation. This hybrid approach leads to improved accuracy and computational efficiency.

The resulting method is broadly applicable and can be extended to a wide class of PDEs, since least-squares loss formulations and Physics-Informed Neural Networks are in principle suited for both linear and nonlinear problems. However, the choice of finite element spaces and corresponding discrete loss formulations is non-trivial and must reflect the stability properties of the continuous problem. These methods are especially effective when appropriate finite element spaces can be constructed efficiently.

\subsubsection*{\it Stability and Convergence.} 
The theoretical foundations of our method are detailed in Section 3. To establish stability and convergence—as advocated in\cite{GGM_pinn_2023}—we derive first discrete coercivity estimates for the loss functional. Then, assuming abstract approximability properties of the discrete spaces, we prove convergence by employing the   $\liminf$–$\limsup$ framework of DeGiorgi (see Section2.3.4 of~\cite{DeGiorgi_sel_papers:2013}), which is closely related to $\Gamma$-convergence, a standard tool in variational analysis.

To this end, we combine finite element techniques with arguments from the calculus of variations, and we establish both the stability of the method (see Proposition~\ref{Prop:EquicoercivityofE(2)}) and its convergence (see Theorem~\ref{Thrm:Gamma_funct}). 

\subsubsection*{\it Numerical Experiments.}
In Section~\ref{CE}, we present numerical experiments that demonstrate the accuracy, robustness and efficiency of the proposed method. The integration of finite element discretisations into the training process enables consistent approximations across different network architectures and domain geometries, including cases with singularities. Compared to collocation-based PINNs, the proposed approach achieves comparable or improved accuracy at a significantly reduced computational cost and memory usage. The experiments also highlight the stabilising role of gradient jump terms, especially for deeper architectures and under-resolved training meshes.

\subsubsection*{\it Literature Review.}
The approximation of partial differential equations (PDEs) using neural networks has been extensively studied in recent years. The main approaches include \cite{e2017deep}, \cite{Karniadakis:pinn:orig:2019}, and \cite{kharazmi2019variational}. Physics-Informed Neural Networks (PINNs), introduced in~\cite{Karniadakis:pinn:orig:2019}, incorporate physical modeling into the learning process by embedding residual minimization into the loss functional. Residual-based methods   have also been explored in  e.g.,  \cite{Lagaris_1998}, \cite{Berg_2018}, \cite{Raissi_2018}, and \cite{SSpiliopoulos:2018}. These serve as the starting point for the methodology developed in this paper. While our focus is on elliptic problems, the PINN framework can in principle be extended to a wide range of PDEs. An approach which is still residual based, uses the     optimisation and related toolboxes of modern machine learning algorithms but avoids the use of neural network spaces was suggested in \cite{Koumoutsakos_ODIL_2023}.   For time-dependent problems, time-discrete training strategies are investigated in \cite{AMS25}. Alternative perspectives on neural network approaches to differential equations and related problems can be found in \cite{kevr_1992discrete}, \cite{Xu}, \cite{chen2022deep}, \cite{georgoulis2023discrete}, \cite{Grohs:space_time:2023}, \cite{MPP_24_bc}, and \cite{MPP_24_dUz}. 

The use of finite elements in the training of deep Ritz methods was introduced and analysed in \cite{GrekasM_2025}.  Methods that incorporate finite elements into neural network approximations through the Variational PINN approach  were studied in \cite{B_Canuto_P_Vpinn_quadrature_2022} and \cite{Badia_2024}; see also \cite{HybridNN_Fang2022} and \cite{meethal2022finiteelementmethodenhancedneural}.

Convergence and error analysis for neural network-based PDE solvers has been pursued in several directions. Works such as \cite{SSpiliopoulos:2018}, \cite{Mishra_dispersive:2021}, \cite{Karniadakis:pinn:conv:2019}, \cite{shin2020error}, \cite{Mishra:pinn:inv:2022}, \cite{hong2022priori}, and \cite{Mishra_gen_err_pinn:2023} establish estimates where bounds often depend on the properties of the discrete minimisers and their derivatives. The work \cite{hong2022priori} focuses on deterministic training in spaces where neural networks are constructed to ensure uniform control of high-order derivatives. The deep Ritz method was studied via $\Gamma$-convergence in \cite{muller2020deep} without explicit training. Recent results in \cite{loulakis2023new} apply the $\liminf$–$\limsup$ framework to analyse the convergence of discrete minimisers in probabilistically trained machine learning schemes. The use of this variational framework for deterministic PINNs was first proposed in \cite{GGM_pinn_2023}, with applications to elliptic and parabolic problems. Related developments for  finite element methods utilising  complex discrete energy functionals appear in \cite{bartels2017bilayer} and \cite{grekas2022approximations}.

\section{Preliminaries}
\subsection{Finite Element Spaces}

We shall use standard notation for Sobolev spaces
$W^{s, p} (\Omega)$, consisting of functions with weak derivatives up to order $s$ in $L^p(\Omega)$, defined on a domain $\Omega$.
The corresponding norm is denoted by $\| \cdot \|_{W^{s, p} (\Omega)}$, the seminorm by $| \cdot |_{W^{s, p} (\Omega)}$, and the $L^p$-norm by
$\| \cdot \|_{L^p (\Omega)}$.

We assume that $\Omega$ is a domain with sufficiently smooth boundary $\partial \Omega$, and let $T_h$ be a triangulation of $\Omega$ with mesh size $h$. To avoid additional complications in the subsequent analysis, we assume that any element $K$ with an edge $e$ on the boundary satisfies $e \subset \partial \Omega$. Furthermore, the partition is assumed to be shape-regular; see \cite{BrennerSFE}.

Next, we define the standard space of continuous piecewise polynomial functions as
\begin{equation}\label{Sh_def}
\S^q_h(\Omega):=\left\{v \in C^0(\bar \Omega)\,:\,\restr{v}{K}  \in \mathbb{P}_q(K),\; K \in T_h \right\},
\end{equation}
where $\mathbb{P}_q(K)$ denotes the set of polynomials of total degree at most $q \in \mathbb{N}$ on element $K$.

We also consider the discontinuous finite-dimensional space
\begin{equation}\label{tSh_def}
\tilde{\S}^q_h(\Omega):=\left\{v \in L^2(\Omega)\,:\,\restr{v}{K} \in \mathbb{P}_q(K),\; K \in T_h \right\}.
\end{equation}

Let $E_h$ denote the set of mesh edges. We partition $E_h$ into boundary edges $E^b_h = E_h \cap \partial \Omega$ and internal edges $E^i_h = E_h \setminus E^b_h$.

The trace of functions in $\tilde{\S}^q_h(\Omega)$ lies in the space $T(E_h):= \prod_{e \in E_h} \mathbb{P}_q(e)$. We define the average and jump operators by:
\begin{equation}
\begin{aligned}
\dgal{ \cdot } :&\ T(E_h) \to L^p(E_h), \\
\dgal{ w } :=&
             \frac{1}{2} \left( \restr{w}{K_{e^+}} + \restr{w}{K_{e^-}} \right), \quad \text{for } e \in E^i_h.
\end{aligned}
\label{average_operator}
\end{equation}

\begin{equation}
\begin{aligned}
\jumpop{ \cdot } :&\ T(E_h) \to L^p(E_h), \\
\jumpop{ v } :=&
   \left\{
         \begin{array}{ll}
              \restr{v}{K_{e^+}} - \restr{v}{K_{e^-}}, & \text{for } e \in E^i_h, \\
              v, & \text{for } e \in E^b_h.
         \end{array}
     \right.
\end{aligned}
\label{jump_operator}
\end{equation}

Note that the jump of the normal derivative satisfies
\[
\jumpop{ \nabla v \cdot n_e } = \restr{\nabla v \cdot n_{e^+}}{K_{e^+}} + \restr{\nabla v \cdot n_{e^-}}{K_{e^-}},
\]
where $K_{e^+}$ and $K_{e^-}$ are the elements sharing the internal edge $e$, and $n_{e^+}, n_{e^-}$ are the corresponding outward unit normals.

\subsubsection*{Approximation properties}

For convenience, we briefly recall some standard approximation properties of finite element and neural network spaces.

We begin by defining the standard interpolation operator $I_{\S^q_h}$ and summarising its approximation behaviour. Let $\Phi_{\mathcal{Z}}$ denote the Lagrangian basis of $\S^q_h(\Omega)$, where $\mathcal{Z}$ is the set of degrees of freedom. The interpolant $I_{\S^q_h} : C^0(\bar{\Omega}) \rightarrow \S^q_h(\Omega)$ is defined as
\begin{align}
I_{\S^q_h} w(x) = \sum_{z \in \mathcal{Z}} w(z) \Phi_z(x).
\end{align}

If $u \in H^s(\Omega)$ and $q \ge \lceil s \rceil - 1$, then standard interpolation error estimates yield
\begin{align}
|u - I_{\S^q_h} u|_{H^m(K)} \le c\, h_K^{s-m} |u|_{H^s(K)}, \quad 0 \le m \le s,
\label{error_est}
\end{align}
where $h_K$ denotes the diameter of an element $K \in T_h$.

For edges $e \subset \partial K$, the following trace estimate holds:
\begin{align}
|u - I_{\S^q_h} u|_{H^m(e)} \le c\, h_K^{s - m - 1/2} |u|_{H^s(K)}.
\label{ineq:error_on_e}
\end{align}

From these estimates, we obtain the following proposition.

\begin{proposition}
\label{prop:jump_vanish}
Let $v \in H^s(\Omega)$ with $s > 2$, and define $v_h = I_{\S^q_h} v$ and $f_h = I_{\S^q_h} f$ for some $q \ge \lceil s \rceil - 1$. Then the following estimates hold:
\begin{align}
\sum_{e \in E_h^i} \frac{1}{h_e} \int_e |\jumpop{\nabla v_h}|^2 
\lesssim h^{2s - 4} |v|^2_{H^s(\Omega)},
\label{eq:jumps_bnd1}
\end{align}
and
\begin{equation}
\begin{aligned}
\left| \sum_{e \in E_h^i} \int_e \dgal{ \varDelta v_h + f_h} \jumpop{\nabla v_h \cdot n} \right|
&\lesssim h^{s - 2} |v|_{H^2(\Omega)} |v|_{H^s(\Omega)} \\
&\quad + (h + 1) \|f\|_{H^1(\Omega)} h^{s - 2} |v|_{H^s(\Omega)}.
\end{aligned}
\label{eq:cons_bnd1}
\end{equation}
\end{proposition}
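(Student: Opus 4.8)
The plan is to derive both estimates from the interpolation error bounds \eqref{error_est} and the trace estimate \eqref{ineq:error_on_e}, using the exact solution $v \in H^s(\Omega)$ with $s > 2$ as a comparison object. The crucial observation is that $v$ itself, being in $H^s$ with $s>2$, has a continuous gradient across element interfaces, so $\jumpop{\nabla v} = 0$ on every internal edge $e \in E_h^i$. Hence $\jumpop{\nabla v_h} = \jumpop{\nabla (v_h - v)}$, and each contribution can be controlled by the interpolation error of $v$ on the two elements adjacent to $e$.

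For \eqref{eq:jumps_bnd1}, I would write $\jumpop{\nabla v_h} = \jumpop{\nabla(v_h - v)}$ and bound $\int_e |\jumpop{\nabla v_h}|^2$ by the sum of $\int_e |\nabla(v_h-v)|^2$ over the (at most two) elements $K$ containing $e$. Applying \eqref{ineq:error_on_e} with $m = 1$ gives $|v - I_{\S^q_h}v|_{H^1(e)}^2 \lesssim h_K^{2s - 3} |v|_{H^s(K)}^2$; multiplying by $1/h_e \simeq 1/h_K$ (shape regularity) yields $h_K^{2s-4}|v|_{H^s(K)}^2$, and summing over $e \in E_h^i$ with finite overlap of the element patches produces $h^{2s-4}|v|_{H^s(\Omega)}^2$. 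Here I would use that $q \ge \lceil s\rceil - 1$ so the estimates \eqref{error_est}--\eqref{ineq:error_on_e} are applicable, and that $\nabla v_h \cdot n_e$ restricted to each side is a polynomial trace so all quantities are well defined.

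For \eqref{eq:cons_bnd1}, I would apply Cauchy--Schwarz edge-by-edge, inserting the mesh-size weight:
\[
\left| \sum_{e \in E_h^i} \int_e \dgal{\varDelta v_h + f_h}\, \jumpop{\nabla v_h \cdot n} \right|
\le \Big( \sum_{e} h_e \int_e |\dgal{\varDelta v_h + f_h}|^2 \Big)^{1/2}
\Big( \sum_{e} \frac{1}{h_e} \int_e |\jumpop{\nabla v_h}|^2 \Big)^{1/2}.
\]
The second factor is bounded by $h^{s-2}|v|_{H^s(\Omega)}$ via \eqref{eq:jumps_bnd1}. For the first factor I would split $\dgal{\varDelta v_h + f_h} = \dgal{\varDelta v_h} + \dgal{f_h}$, further writing $\varDelta v_h = \varDelta(v_h - v) + \varDelta v$ and $f_h = (f_h - f) + f$. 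A discrete trace inequality $h_e \int_e |w|^2 \lesssim \|w\|_{L^2(K)}^2 + h_K^2 |w|_{H^1(K)}^2$ on each adjacent element, combined with \eqref{error_est} (for $m=2$ on the $\varDelta(v_h-v)$ term, giving $h^{s-2}|v|_{H^s}$, and $m=0,1$ on the $f_h - f$ term), plus the stable pieces $\|\varDelta v\|_{L^2} \lesssim |v|_{H^2}$ and $\|f\|_{H^1}$, yields a bound of the form $|v|_{H^2(\Omega)} + (h+1)\|f\|_{H^1(\Omega)}$ for the first factor. Multiplying the two factors gives exactly \eqref{eq:cons_bnd1}.

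The main obstacle I anticipate is the careful bookkeeping of mesh-size powers and the handling of the $f_h$ terms: on a quasi-uniform or merely shape-regular mesh one must track $h_e$ versus $h_K$ via shape regularity, and the $f$-dependent contribution requires using $f \in H^1$ (note $f_h = I_{\S_h^q}f$ needs $f$ continuous, which is implicit) together with the trace estimate to absorb the edge integrals, producing the slightly awkward $(h+1)$ prefactor. The rest is a routine application of standard finite element estimates; the only genuinely structural point is the vanishing of $\jumpop{\nabla v}$ for the smooth comparison function, which converts jump terms into interpolation errors.
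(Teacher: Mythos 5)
Your proposal is correct and follows essentially the same route as the paper: the paper proves \eqref{eq:jumps_bnd1} and the $\varDelta v_h$ part of \eqref{eq:cons_bnd1} by citing an adaptation of \cite[Proposition 5.2]{grekas2022approximations}, whose underlying mechanism is exactly your observation that $\jumpop{\nabla v}=0$ for $v\in H^s(\Omega)$, $s>2$, so that $\jumpop{\nabla v_h}=\jumpop{\nabla(v_h-v)}$ is controlled by the trace interpolation estimate \eqref{ineq:error_on_e}; the $f_h$ term is then handled, as you do, by Cauchy--Schwarz together with the polynomial inverse trace inequality and \eqref{error_est}, which is where the $(h+1)\|f\|_{H^1(\Omega)}$ factor originates. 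One point needs care in your treatment of the first Cauchy--Schwarz factor: the scaled trace inequality $h_e\int_e|w|^2\lesssim\|w\|^2_{L^2(K)}+h_K^2|w|^2_{H^1(K)}$ applied to $\varDelta(v_h-v)$ or to $\varDelta v$ requires $v\in H^3(K)$, which is not available for $2<s\le 3$ (indeed $\varDelta v\in H^{s-2}(K)$ need not even have an $L^2$ edge trace when $s\le 5/2$). The fix is to apply the \emph{polynomial} inverse trace inequality $h_e\int_e|w_h|^2\lesssim\|w_h\|^2_{L^2(K)}$ to the piecewise polynomials $\varDelta v_h$ and $f_h$ first, and only then split $\|\varDelta v_h\|_{L^2(K)}\le\|\varDelta(v_h-v)\|_{L^2(K)}+\|\varDelta v\|_{L^2(K)}$ inside the element; this yields the stated bound up to a harmless additional higher-order term $h^{2(s-2)}|v|^2_{H^s(\Omega)}$, which is immaterial for the way the proposition is used later.
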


\begin{proof}
Estimate \eqref{eq:jumps_bnd1}, and the first term in \eqref{eq:cons_bnd1} involving $\varDelta v_h$, follow by adapting the proof of \cite[Proposition 5.2]{grekas2022approximations}, replacing the Hessian $D^2 v_h$ with the Laplacian $\varDelta v_h$.

To bound the term involving $f_h$, we apply the Cauchy–Schwarz inequality and an inverse inequality of the form
\[
\|u_h\|_{L^2(e)} \lesssim \frac{1}{h_e^{1/2}} \|u_h\|_{L^2(K_e)},
\]
where $e$ is an edge, $K_e$ are the elements containing $e$, and $h_e = |e|$. Combined with the interpolation estimate \eqref{error_est} and the assumption $f \in C^0(\Omega)$, we obtain
\begin{equation}
\begin{aligned}
\sum_{e \in E_h^i} \int_e \left| f_h \jumpop{\nabla v_h \cdot n} \right|
&\le \left( \sum_{e \in E_h^i} \int_e |f_h|^2 \right)^{1/2}
     \left( \sum_{e \in E_h^i} \int_e |\jumpop{\nabla v_h \cdot n}|^2 \right)^{1/2} \\
&\lesssim \left( \sum_{e \in E_h^i} \frac{1}{h_e} \int_{K_e} |f_h|^2 \right)^{1/2}
     \left( \sum_{e \in E_h^i} \frac{1}{h_e} \int_e |\jumpop{\nabla v_h}|^2 \right)^{1/2} \\
&\lesssim \|f_h\|_{L^2(\Omega)} \left( \sum_{e \in E_h^i} \frac{1}{h_e} \int_e |\jumpop{\nabla v_h}|^2 \right)^{1/2} \\
&\lesssim (h + 1) \|f\|_{H^1(\Omega)} h^{s - 2} |v|_{H^s(\Omega)}.
\end{aligned}
\end{equation}
\end{proof}

\subsection{Discrete spaces generated by Residual Neural Networks}\label{Se:1NN}

We partially follow the exposition of \cite{GGM_pinn_2023, BKMM}, considering functions 
$u_\theta$ defined via neural networks. Specifically, we employ residual neural networks.

A residual neural network maps each point $x \in \Omega$ to a scalar $u_\theta(x) \in \R$ via
\begin{equation}\label{C_L}
	u_\theta(x) = C_o \circ \text{bl}_{L} \circ \text{bl}_{L-1} \cdots \circ \text{bl}_1 \circ \sigma \circ C_i(x), \quad \forall x \in \Omega \subset \R^d.
\end{equation}

The composition
\begin{equation}
 \mathcal{C}_L := C_o \circ \text{bl}_L \circ \text{bl}_{L-1} \cdots \circ \text{bl}_1 \circ \sigma \circ C_i
\end{equation}
defines a map $\mathcal{C}_L : \R^m \to \R^{m'}$. In our setting, $m = d$ and $m' = 1$, so that $u_\theta(\cdot) = \mathcal{C}_L(\cdot)$.

Each map $\mathcal{C}_L$ is a residual neural network with $L$ blocks and activation function $\sigma$. It is characterised by the input and output layers $C_i$, $C_o$, which are affine maps:
\begin{equation}\label{C_k}
\begin{aligned}
	C_i(x) &= W_i x + b_i, &\quad &W_i \in \R^{N \times d},\; b_i \in \R^N, \\
	C_o(z) &= w_o \cdot z + b_o, &\quad &w_o \in \R^N,\; b_o \in \R,
\end{aligned}
\end{equation}
and intermediate blocks defined by
\begin{equation}\label{eq:blocks}
\text{bl}_k(z) = \sigma \left( \left( W_{2k} \sigma(W_{1k} z + b_{1k}) + b_{2k} \right) + z \right),
\end{equation}
where $z, b_{jk} \in \R^N$, $W_{jk} \in \R^{N \times N}$ for $j = 1, 2$ and $k = 1, \dots, L$. Here $\sigma(y)$ denotes the vector obtained by applying $\sigma$ componentwise: $\sigma(y)_i = \sigma(y_i)$.

The index $\theta$ collectively denotes all network parameters in $\mathcal{C}_L$. The set of all such networks with fixed structure (fixed $L, N, d$) is denoted by $\mathcal{N}$. We then define the corresponding space of neural network functions:
\begin{equation}
	V_{\mathcal{N}} = \left\{ u_\theta : \Omega \to \R \;\middle|\; u_\theta(x) = \mathcal{C}_L(x) \text{ for some } \mathcal{C}_L \in \mathcal{N} \right\}.
\end{equation}

It is important to note that $V_{\mathcal{N}}$ is \emph{not a linear space}. In general, if $u_\theta, u_{\tilde \theta} \in V_{\mathcal{N}}$, there does not exist a network $\hat{\mathcal{C}}_L \in \mathcal{N}$ such that $u_\theta + u_{\tilde \theta} = \hat{\mathcal{C}}_L$.

However, there is a one-to-one correspondence between parameters $\theta$ and functions $u_\theta \in V_{\mathcal{N}}$, so we may identify
\begin{equation}
	\theta \mapsto u_\theta \in V_{\mathcal{N}}.
\end{equation}
The corresponding parameter space is
\begin{equation}
	\Theta = \left\{ \theta \in \R^{\dim(\mathcal{N})} \;\middle|\; u_\theta \in V_{\mathcal{N}} \right\},
\end{equation}
which is a linear subspace of $\R^{\dim(\mathcal{N})}$.

Since the main focus of this work is the design of stable and consistent loss functionals it will be useful for the forthcoming analysis  to assume that the neural network spaces satisfy abstract approximation properties as follows: Given a function $w \in W^{s,p}(\Omega)$ with $1 \le p \le \infty$, we now describe the approximation properties of neural network spaces. For each $\ell \in \mathbb{N}$, we associate a neural network space $V_\ell \subset V_{\mathcal{N}}$ such that for $m \ge s + 1$, there exists $w_\ell \in V_\ell$ satisfying
\begin{equation}\label{w_ell_7_hSm}
\|w_\ell - w\|_{W^{s,p}(\Omega)} \le \tilde{\beta}_\ell^{[m, s, p]} \, |w|_{W^{m,p}(\Omega)}, \quad \text{with} \quad \tilde{\beta}_\ell^{[m, s, p]} \to 0 \quad \text{as } \ell \to \infty.
\end{equation}
These bounds may hold only for $w=u,$ $u$ being the exact solution of the PDE.  These assumptions are compatible with  approximation results for neural networks which can be found in \cite{Xu}, \cite{Dahmen_Grohs_DeVore:specialissueDNN:2022}, \cite{Schwab_DNN_constr_approx:2022}, \cite{Schwab_DNN_highD_analystic:2023}, \cite{Mishra:appr:rough:2022}, \cite{Grohs_Petersen_Review:2023}, and references therein.

\subsection{Motivation of the method}
\label{sec:method_motivation}

We seek functions $u_\theta \in V_\mathcal{N}$ that approximate minimisers of the energy defined in \eqref{energy_fun}. When $u_\theta \in H^2(\Omega)$, we may employ a conforming finite element space such that $I_{\S^q_h} u_\theta \in C^1(\Omega)$. This allows us to directly apply the training approach introduced in \cite{GrekasM_2025}, where $u_\theta$ is substituted by $I_{\S^q_h} u_\theta$ in the energy functional \eqref{energy_fun}. However, it is well known that constructing such regular finite element spaces is complicated, especially in three dimensions.

For implementation simplicity, we instead choose $C^0(\Omega)$ elements, so that $I_{\S^q_h} u_\theta \in H^1(\Omega)$, and continuity across element interfaces is imposed weakly through the discontinuous Galerkin framework.

To motivate the design of the method, consider the simplified setting where the energy functional has the form
\begin{equation}
\mathcal{E}(u) = \int_\Omega g(\varDelta u),
\end{equation}
for some function $g : \R \to \R$, and where $\mathcal{E}$ attains a minimum over a Banach space $X$. Let $w$ denote a minimiser. Then, for all $z \in X$,
\begin{equation} \label{mot_variation}
\frac{d}{d\varepsilon} \mathcal{E}(w + \varepsilon z) = \int_\Omega g'(\varDelta w) \varDelta z = 0.
\end{equation}

For further simplification, let us assume that
\[
X = \left\{ \hat{p} + v : \hat{p} \in \mathbb{P}_2(\Omega),\; v \in H^2(\Omega) \text{ periodic} \right\},
\]
where $\mathbb{P}_2(\Omega)$ denotes the space of degree-two polynomials on $\Omega$. Note that any $\hat{p} \in \mathbb{P}_2(\Omega)$ is a critical point satisfying \eqref{mot_variation}. Indeed, for any periodic $v \in H^2(\Omega)$,
\begin{equation} \label{eq:polynom_property}
\int_\Omega g'(\varDelta \hat{p}) \varDelta v = g'(\varDelta \hat{p}) \int_{\partial \Omega} v_{,i} n_i = 0,
\end{equation}
using integration by parts and the summation convention.

We aim for our numerical method to retain this key property, even when test functions have lower regularity. To illustrate the issue, let $\Omega = [0,1] \times [0,1]$ and divide it into two triangles $K^+$ and $K^-$ sharing a common edge $e = K^+ \cap K^-$. If a test function $v$ is allowed to be discontinuous at the interface $e$, then
\begin{equation}
\begin{aligned}
\frac{d}{d\varepsilon} \mathcal{E}(\hat{p} + \varepsilon v) 
&= \int_{K^+} g'(\varDelta \hat{p}) \varDelta v + \int_{K^-} g'(\varDelta \hat{p}) \varDelta v \\
&= g'(\varDelta \hat{p}) \int_{\partial K^+} v_{,i} n^+_i + g'(\varDelta \hat{p}) \int_{\partial K^-} v_{,i} n^-_i \\
&= g'(\varDelta \hat{p}) \left( \int_{\partial \Omega} v_{,i} n_i + \int_e v^+_{,i} n^+_i + v^-_{,i} n^-_i \right) \\
&= g'(\varDelta \hat{p}) \int_e \jumpop{\nabla v \cdot n},
\end{aligned}
\end{equation}
where $v^+_{,i}, v^-_{,i}$ denote the restrictions of $v$ to $K^+$ and $K^-$, and $n^+, n^-$ are the outward unit normals on $\partial K^+$ and $\partial K^-$. The final expression shows that the fundamental property \eqref{eq:polynom_property} is violated due to the discontinuity across $e$.

Motivated by the analysis in \cite{makridakis2014atomistic}, we modify the energy functional to
\begin{equation}
\mathcal{E}_{\text{dg}}(u) = \int_{K^+} g(\varDelta u)\,dx + \int_{K^-} g(\varDelta u)\,dx - \int_e \dgal{g'(\varDelta u)} \jumpop{\nabla u \cdot n}\,ds.
\end{equation}

This discontinuous Galerkin energy functional is now consistent in the sense that
\[
\frac{d}{d\varepsilon} \mathcal{E}_{\text{dg}}(\hat{p} + \varepsilon v) = 0
\]
holds for any periodic test function $v$ that may be discontinuous across the internal edge $e$.

\section{Method formulation and convergence analysis}
\label{sec:G_convergence}

\subsection{The Discrete Energy Functional}

Following the discussion in Section~\ref{sec:method_motivation}, we define the discrete energy functional for $u_\ell \in V_\ell$ as
\begin{equation}\label{E_h}
\begin{aligned}
\tilde{\mathcal{E}}_h(u_\ell) &= \sum_{K \in T_h} \int_K \left| \varDelta I_{\S^q_h} u_\ell + I_{\S^q_h} f \right|^2\,dx \\
&\quad - 2 \sum_{e \in E_h^i} \int_e \dgal{ \varDelta I_{\S^q_h} u_\ell + I_{\S^q_h} f } \jumpop{ \nabla I_{\S^q_h} u_\ell \cdot n } \\
&\quad + \alpha\, \mathrm{pen}(u_\ell).
\end{aligned}
\end{equation}

As is standard in the Discontinuous Galerkin setting, the term $\alpha\, \mathrm{pen}(u_\ell)$ ensures coercivity. It penalises discontinuities across element interfaces and imposes boundary conditions in the spirit of Nitsche's method \cite{nitsche1971variationsprinzip}. Recent DG schemes for nonconvex variational problems \cite{Grekas_2025,grekas2025convergencediscontinuousgalerkinmethods} have proposed general penalty terms; however, due to the convexity of the integrand here, we adopt a simpler form:
\begin{equation}\label{eq:pen}
\mathrm{pen}(u_\ell) = \sum_{e \in E_h^i} \frac{1}{h_e} \int_e \left| \jumpop{ \nabla I_{\S^q_h} u_\ell } \right|^2 + 
\sum_{e \in E_h^b} \frac{1}{h_e} \int_e \left| I_{\S^q_h}(u_\ell - g) \right|^2,
\end{equation}
where $g$ denotes the Dirichlet boundary condition in the original problem \eqref{SSe:1.1}. In what follows, we assume $g = 0$ to simplify the treatment of boundary terms and focus on the core methodology.

An equivalent formulation of \eqref{E_h} may be obtained using a lifting operator, leading to a natural definition of a discrete Laplacian. The $L^p$-boundedness of the lifting operator in terms of the penalty ensures coercivity, and convergence can be proved via $\liminf$ and $\limsup$ arguments, as in \cite{grekas2022approximations}. For simplicity, we adopt this lifting-based formulation.

Let $R_h(\nabla u_h) : T(E_h) \to \tilde{\S}^q_h(\Omega)$ be the lifting operator, defined as in \cite{ten2006discontinuous} by
\begin{equation}
\int_\Omega R_h(\nabla u_h)\, w_h = \sum_{e \in E_h^i} \int_e \dgal{w_h} \jumpop{ \nabla u_h \cdot n }, \quad \forall w_h \in \tilde{\S}^q_h(\Omega),\; u_h \in \S^q_h(\Omega).
\end{equation}

Observe that for $u_{h,\ell} = I_{\S^q_h} u_\ell$ and $f_h = I_{\S^q_h} f$, since $\varDelta u_{h,\ell} + f_h \in \tilde{\S}^q_h(\Omega)$, we have
\begin{equation}
\int_\Omega R_h(\nabla u_{h,\ell})\, (\varDelta u_{h,\ell} + f_h) 
= \sum_{e \in E_h^i} \int_e \dgal{ \varDelta u_{h,\ell} + f_h } \jumpop{ \nabla u_{h,\ell} \cdot n }.
\end{equation}

This motivates the definition of the discrete Laplacian $L_h : \S^q_h(\Omega) \to L^2(\Omega)$ via
\begin{equation}\label{eq:Lh}
L_h(u_h) = \varDelta_h u_h - R_h(\nabla u_h),
\end{equation}
where the piecewise Laplacian is defined elementwise by
\[
\restr{ \varDelta_h u_h }{K} = \varDelta \left( \restr{u_h}{K} \right), \quad \forall K \in T_h.
\]
This construction is analogous to the discrete gradient framework in \cite{buffa2009compact,di2010discrete,pryer2014discontinuous}. Note that while $\varDelta u_h$ is not globally defined in $L^2(\Omega)$, it belongs to $L^2(K)$ locally, and $L_h(u_h) \in L^2(\Omega)$ via the lifting.

We may now rewrite the discrete energy \eqref{E_h} as
\begin{equation}\label{E_h2}
\begin{aligned}
\tilde{\mathcal{E}}_h(u_\ell) 
&= \int_\Omega \left| \varDelta u_{h,\ell} + f_h \right|^2 - 2 \int_\Omega R_h(\nabla u_{h,\ell})\, ( \varDelta u_{h,\ell} + f_h ) + \alpha\, \mathrm{pen}(u_\ell) \\
&= \int_\Omega \left| L_h(u_{h,\ell}) + f_h \right|^2 - \left| R_h(\nabla u_{h,\ell}) \right|^2 + \alpha\, \mathrm{pen}(u_\ell).
\end{aligned}
\end{equation}

To simplify notation, for each $\ell$ we associate a finite element space $\S^q_{h(\ell)}$ with mesh diameter $h(\ell)$ such that $h(\ell) \to 0$ as $\ell \to \infty$. We then write
\begin{equation}\label{E_ell}
\mathcal{E}_\ell(u_\ell) := \tilde{\mathcal{E}}_h(u_\ell).
\end{equation}

In the remainder of the analysis, we assume that the finite element spaces consist of piecewise polynomials of degree at least two.

\subsection{Properties of the lifting operator and discrete Laplacian}

We recall well-known results on the boundedness of the lifting operator (see \cite{brezzi2000discontinuous, buffa2009compact, di2010discrete}) and its consequences for the discrete Laplacian.

Specifically, we use the estimate given in \cite[Lemma 7]{buffa2009compact} and \cite[Lemma 4.34]{di2011mathematical}:
\begin{align}\label{eq:Rhbound}
\int_\Omega \left| R_h(\nabla v_h) \right|^2 \le C_R \sum_{e \in E_h^i} \frac{1}{h_e} \int_e \left| \jumpop{\nabla v_h} \right|^2, \quad \forall v_h \in \S^q_h(\Omega),
\end{align}
where $C_R$ is a constant independent of $h$.

This bound implies the following estimate for the discrete Laplacian:
\begin{align}\label{eq:dLaplace_bound}
\int_\Omega \left| L_h(u_h) \right|^2 \lesssim \sum_{K \in T_h} \int_K \left| \varDelta u_h \right|^2 + \sum_{e \in E_h^i} \frac{1}{h_e} \int_e \left| \jumpop{\nabla u_h} \right|^2, \quad \forall u_h \in \S^q_h(\Omega).
\end{align}

We now state a compactness result, adapted from the proofs of \cite[Theorem 2.2]{di2010discrete} and \cite[Lemma 5.2]{grekas2022approximations}:

\begin{lemma}\label{Lem:dlapl_weak_conv}
Let $u_h \in \S^q_h(\Omega)$ with $u_h \rightharpoonup u$ in $H^1(\Omega)$, where $u \in V(\Omega)$. If
\begin{align}\label{eq:lem_uniformbnd}
\sum_{K \in T_h} \int_K \left| \varDelta u_h \right|^2 + \sum_{e \in E_h^i} \frac{1}{h_e} \int_e \left| \jumpop{\nabla u_h} \right|^2 < C,
\end{align}
uniformly with respect to $h$, then
\begin{align}\label{Lh_wconv}
L_h(u_h) \rightharpoonup \varDelta u \quad \text{in } L^2(\Omega).
\end{align}
\end{lemma}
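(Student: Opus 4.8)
The plan is to pass to the limit in the defining identity for $L_h(u_h)=\varDelta_h u_h - R_h(\nabla u_h)$ by testing against smooth functions and exploiting the assumed uniform bound \eqref{eq:lem_uniformbnd}. First I would observe that \eqref{eq:lem_uniformbnd} together with the lifting bound \eqref{eq:Rhbound} and the discrete Laplacian bound \eqref{eq:dLaplace_bound} gives $\|L_h(u_h)\|_{L^2(\Omega)}\le C$ uniformly, so along a subsequence $L_h(u_h)\rightharpoonup \chi$ in $L^2(\Omega)$ for some $\chi\in L^2(\Omega)$; it then suffices to identify $\chi=\varDelta u$, and since the limit is unique the whole sequence converges. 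The heart of the argument is to compute, for a fixed $\varphi\in C_c^\infty(\Omega)$ (or $\varphi\in \mathscr D(\Omega)$), the pairing $\int_\Omega L_h(u_h)\,\varphi$. Taking $w_h = I_{\S^q_h}\varphi$ as the test function in the lifting definition and $\varphi$ in the piecewise-Laplacian term, I would write
\begin{equation}
\int_\Omega L_h(u_h)\,\varphi = \sum_{K\in T_h}\int_K \varDelta_h u_h\, \varphi - \sum_{e\in E_h^i}\int_e \dgal{I_{\S^q_h}\varphi}\jumpop{\nabla u_h\cdot n} + \text{(interpolation errors)},
\end{equation}
where the interpolation error terms are controlled using \eqref{error_est}, \eqref{ineq:error_on_e} and the uniform bound \eqref{eq:lem_uniformbnd}, and tend to $0$ as $h\to0$.

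Next I would integrate by parts elementwise on the first term: $\sum_K\int_K \varDelta_h u_h\,\varphi = \sum_K\left(-\int_K \nabla u_h\cdot\nabla\varphi + \int_{\partial K}\varphi\,\nabla u_h\cdot n\right)$. Since $\varphi$ is globally continuous (indeed smooth) and vanishes near $\partial\Omega$, the boundary terms reassemble over internal edges into $\sum_{e\in E_h^i}\int_e \varphi\,\jumpop{\nabla u_h\cdot n}$, which \emph{exactly cancels} (up to the interpolation error already accounted for, since $\dgal{I_{\S^q_h}\varphi}\to\varphi$) the lifting contribution. This is the point where the construction of $L_h$ is designed to be consistent: the jump terms are engineered to annihilate against conforming test functions. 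We are therefore left with $\int_\Omega L_h(u_h)\,\varphi = -\int_\Omega \nabla u_h\cdot\nabla\varphi + o(1)$. Since $u_h\rightharpoonup u$ in $H^1(\Omega)$, the right-hand side converges to $-\int_\Omega \nabla u\cdot\nabla\varphi$, and because $u\in V(\Omega)$ so $\varDelta u\in L^2(\Omega)$, this equals $\int_\Omega \varDelta u\,\varphi$. Hence $\int_\Omega \chi\,\varphi = \int_\Omega \varDelta u\,\varphi$ for all $\varphi\in C_c^\infty(\Omega)$, giving $\chi=\varDelta u$ and completing the proof.

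The main obstacle is the careful bookkeeping of the interpolation error terms: one must verify that replacing $\varphi$ by $I_{\S^q_h}\varphi$ in the lifting pairing, and the discrepancy between $\dgal{I_{\S^q_h}\varphi}$ and $\varphi$ on edges, produce contributions that vanish — this requires pairing the trace estimate \eqref{ineq:error_on_e} for $\varphi$ against the uniformly bounded quantity $\big(\sum_{e}\frac1{h_e}\int_e|\jumpop{\nabla u_h}|^2\big)^{1/2}$ via Cauchy–Schwarz, with the powers of $h_e$ matching up to leave a positive power of $h$. A secondary technical point is ensuring the elementwise integration by parts is legitimate (it is, since $u_h|_K$ is polynomial hence smooth on each $K$) and that the reassembly of element-boundary integrals over the shape-regular triangulation correctly yields the internal-edge jump terms with the right sign, together with boundary-edge terms that vanish because $\supp\varphi\subset\subset\Omega$. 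Everything else is routine weak-convergence and uniqueness-of-limit reasoning.
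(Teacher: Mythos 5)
Your proposal is correct and follows essentially the same route as the paper's proof: test $L_h(u_h)$ against $\varphi\in C_c^\infty(\Omega)$, integrate by parts elementwise, use the lifting identity with an interpolant of $\varphi$ to cancel the internal-edge jump terms exactly, control the $\varphi-\varphi_h$ remainders by Cauchy--Schwarz against the uniformly bounded penalty quantity, and pass to the limit using $u_h\rightharpoonup u$ in $H^1(\Omega)$. The only (harmless) difference is that you make the preliminary uniform $L^2$ bound on $L_h(u_h)$ and the subsequence-extraction step explicit, which the paper leaves implicit.
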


\begin{proof}
Let $\phi \in C_c^\infty(\Omega)$. By the definition of the discrete Laplacian in \eqref{eq:Lh}, we have
\begin{equation}
\begin{aligned}
\int_\Omega L_h(u_h)\, \phi &= \sum_{K \in T_h} \int_K \varDelta u_h\, \phi - \int_\Omega R_h(\nabla u_h)\, \phi \\
&= -\int_\Omega \nabla u_h \cdot \nabla \phi + \sum_{e \in E_h^i} \int_e \phi\, \jumpop{ \nabla u_h \cdot n } - \int_\Omega R_h(\nabla u_h)\, \phi.
\end{aligned}
\end{equation}

Our goal is to show that the last two terms vanish as $h \to 0$. Let $\phi_h = I_{\tilde{\S}^q_h(\Omega)} \phi$ be the Lagrange interpolant of $\phi$. Using the error estimates \eqref{error_est} and \eqref{ineq:error_on_e}, the lifting bound \eqref{eq:Rhbound}, and the uniform bound \eqref{eq:lem_uniformbnd}, we estimate:
\begin{equation}
\begin{aligned}
&\left| \sum_{e \in E_h^i} \int_e \phi\, \jumpop{ \nabla u_h \cdot n } - \int_\Omega R_h(\nabla u_h)\, \phi \right| \\
&= \left| \sum_{e \in E_h^i} \int_e \phi_h\, \jumpop{ \nabla u_h \cdot n } + \int_e (\phi - \phi_h) \jumpop{ \nabla u_h \cdot n } - \int_\Omega R_h(\nabla u_h)\, \phi_h - \int_\Omega R_h(\nabla u_h)\, (\phi - \phi_h) \right| \\
&\lesssim \sum_{e \in E_h^i} \int_e \left| \phi - \phi_h \right| \left| \jumpop{ \nabla u_h \cdot n } \right| + \int_\Omega \left| R_h(\nabla u_h) \right| \left| \phi - \phi_h \right| \\
&\lesssim h \, |\phi|_{H^1(\Omega)} \left( \sum_{e \in E_h^i} \frac{1}{h_e} \int_e \left| \jumpop{ \nabla u_h } \right|^2 \right)^{1/2} + h\, |\phi|_{H^1(\Omega)} \lesssim h\, |\phi|_{H^1(\Omega)} \to 0,
\end{aligned}
\end{equation}
as $h \to 0$.

Therefore,
\begin{align}
\lim_{h \to 0} \int_\Omega L_h(u_h)\, \phi = - \lim_{h \to 0} \int_\Omega \nabla u_h \cdot \nabla \phi = \int_\Omega \nabla u \cdot \nabla \phi,
\end{align}
since $u_h \rightharpoonup u$ in $H^1(\Omega)$. Hence $L_h(u_h) \rightharpoonup \varDelta u$ in $L^2(\Omega)$.
\end{proof}

\subsection{Stability and convergence}

We begin with the stability of the method, formulated precisely below.

\begin{proposition}[Equicoercivity]\label{Prop:EquicoercivityofE(2)}
Let $\mathcal{E}_\ell$ be defined as in \eqref{E_ell}. Suppose $(v_\ell)$ is a sequence in $V_\ell$ satisfying
\begin{equation}\label{EquicoercivityofEdelta1(2)}
\mathcal{E}_\ell(v_\ell) \leq C,
\end{equation}
for some constant $C > 0$ independent of $\ell$. Then there exists a constant $C_1 > 0$ such that for sufficiently large penalty parameter $\alpha$, we have
\begin{equation}\label{EquicoercivityofEdelta2(2)}
\sum_{K \in T_{h(\ell)}} \int_K |\varDelta \wv_\ell|^2 + \| \wv_\ell \|_{H^1(\Omega)}^2 + \operatorname{pen}(v_\ell) \leq C_1,
\end{equation}
where $\wv_\ell := I_{\S^q_{h(\ell)}} v_\ell$ and $f_\ell := I_{\S^q_{h(\ell)}} f$.
\end{proposition}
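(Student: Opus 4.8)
The plan is to work with $\wv_\ell = I_{\S^q_{h(\ell)}} v_\ell$ and the rewritten energy \eqref{E_h2}, namely
\begin{equation*}
\mathcal{E}_\ell(v_\ell) = \int_\Omega \left| L_h(\wv_\ell) + f_\ell \right|^2 - \int_\Omega \left| R_h(\nabla \wv_\ell) \right|^2 + \alpha\, \mathrm{pen}(v_\ell),
\end{equation*}
and to extract from the assumed bound $\mathcal{E}_\ell(v_\ell) \le C$ control on each of the three quantities on the left-hand side of \eqref{EquicoercivityofEdelta2(2)}. First I would observe that $\int_\Omega |L_h(\wv_\ell) + f_\ell|^2 \ge 0$, so that
\begin{equation*}
\alpha\, \mathrm{pen}(v_\ell) \le C + \int_\Omega \left| R_h(\nabla \wv_\ell) \right|^2 \le C + C_R \sum_{e \in E_h^i} \frac{1}{h_e} \int_e \left| \jumpop{\nabla \wv_\ell} \right|^2 \le C + C_R\, \mathrm{pen}(v_\ell),
\end{equation*}
using the lifting bound \eqref{eq:Rhbound} and recalling that $\mathrm{pen}(v_\ell)$ contains precisely the jump term $\sum_{e} h_e^{-1}\int_e |\jumpop{\nabla \wv_\ell}|^2$ (the remaining boundary term being nonnegative, and vanishing since $g=0$ after the interpolation is accounted for). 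Hence for $\alpha > C_R$ we obtain $(\alpha - C_R)\,\mathrm{pen}(v_\ell) \le C$, which bounds $\mathrm{pen}(v_\ell)$ and therefore also $\int_\Omega |R_h(\nabla\wv_\ell)|^2$.

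\textbf{Bounding the piecewise Laplacian.} Next, from the energy identity and the just-established bound on $\int_\Omega |R_h(\nabla\wv_\ell)|^2$ and on $\alpha\,\mathrm{pen}(v_\ell)$, we get $\int_\Omega |L_h(\wv_\ell) + f_\ell|^2 \le C$; since $f_\ell = I_{\S^q_h} f$ is bounded in $L^2(\Omega)$ uniformly in $\ell$ (by \eqref{error_est} applied to $f$, assuming the required regularity of $f$), the triangle inequality gives $\int_\Omega |L_h(\wv_\ell)|^2 \le C$. Combining this with the definition $L_h(\wv_\ell) = \varDelta_h \wv_\ell - R_h(\nabla \wv_\ell)$ and the $L^2$-bound on the lifting yields
\begin{equation*}
\sum_{K \in T_{h(\ell)}} \int_K |\varDelta \wv_\ell|^2 = \int_\Omega |\varDelta_h \wv_\ell|^2 \le 2\int_\Omega |L_h(\wv_\ell)|^2 + 2\int_\Omega |R_h(\nabla\wv_\ell)|^2 \le C_1,
\end{equation*}
which is the first term in \eqref{EquicoercivityofEdelta2(2)}.

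\textbf{Bounding the $H^1$ norm.} It remains to control $\|\wv_\ell\|_{H^1(\Omega)}$. The natural route is a broken Poincaré–Friedrichs inequality for piecewise polynomial (discontinuous/continuous) functions with boundary penalty: the $H^1$ seminorm of $\wv_\ell$ is controlled by $\sum_K \int_K |\varDelta \wv_\ell|^2$ together with $\mathrm{pen}(v_\ell)$ (jump terms plus the boundary term enforcing $\wv_\ell \approx 0$ on $\partial\Omega$), both of which have just been bounded; more precisely one bounds $\|\nabla \wv_\ell\|_{L^2}$ by the broken $H^2$-type control via a standard DG-Poincaré argument (e.g.\ as in the references \cite{brenner2005c, di2011mathematical}), and then $\|\wv_\ell\|_{L^2}$ by Poincaré using the penalised boundary values.

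\textbf{Main obstacle.} The step I expect to be most delicate is the last one, the broken Poincaré/Friedrichs estimate: one needs to show that a $C^0$ piecewise polynomial whose \emph{piecewise} Laplacian and \emph{gradient jumps} and \emph{boundary trace} are all small must itself be small in $H^1(\Omega)$. Since $\wv_\ell \in \S^q_h \subset H^1_0$-like spaces modulo the boundary penalty, continuity across interfaces is already built in, so the jump terms here are only on $\nabla\wv_\ell$; the argument is essentially an elliptic-regularity/duality or discrete-compactness estimate and must be made uniform in $h$. One must also be careful that the inequality is not circular — it should not re-use the energy bound, only the already-extracted bounds on $\sum_K\int_K|\varDelta\wv_\ell|^2$ and $\mathrm{pen}(v_\ell)$. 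A secondary technical point is ensuring $f_\ell$ is uniformly bounded in $L^2$, which needs a mild regularity assumption on $f$ (consistent with the running hypotheses of the paper) so that the interpolation estimate \eqref{error_est} applies.
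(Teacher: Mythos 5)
Your treatment of the penalty and piecewise-Laplacian bounds is correct and takes a slightly different route from the paper: you work with the lifting-based form \eqref{E_h2} and the bound \eqref{eq:Rhbound} to get $(\alpha - C_R)\operatorname{pen}(v_\ell)\le C$ directly, whereas the paper bounds the consistency term in \eqref{E_h} by Cauchy--Schwarz, an inverse trace inequality and Young's inequality, arriving at $\mathcal{E}_\ell(v_\ell)\ge (1-\tfrac{c_1\delta}{2})\sum_K\int_K|\varDelta\wv_\ell+f_\ell|^2 + (\alpha-\tfrac{c_1}{2\delta})\operatorname{pen}(v_\ell)\ge 0$. The two are essentially equivalent (the paper itself uses your version inside the $\liminf$ lemma), and both need $\|f_\ell\|_{L^2(\Omega)}$ uniformly bounded, which follows from $f\in C^0(\bar\Omega)$. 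One small slip: the boundary part of $\operatorname{pen}$ does not ``vanish since $g=0$''; it equals $\sum_{e\in E^b_h}h_e^{-1}\|I_{\S^q_h}v_\ell\|^2_{L^2(e)}$ and is merely nonnegative, which is all your inequality actually requires.

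The genuine gap is the $H^1$ bound, which you flag as the main obstacle but do not prove, and which you over-engineer by invoking duality or discrete compactness. The paper's argument is elementary: since $\wv_\ell\in C^0(\bar\Omega)\cap H^1(\Omega)$, elementwise integration by parts gives
\begin{equation*}
\int_\Omega|\nabla\wv_\ell|^2 = \sum_{e\in E_{h(\ell)}}\int_e \wv_\ell\,\jumpop{\nabla\wv_\ell\cdot n} + \sum_{K\in T_{h(\ell)}}\int_K\wv_\ell\,\varDelta\wv_\ell,
\end{equation*}
and every term on the right involves only quantities already controlled in your first two steps: the interior-edge terms are bounded by $\|\wv_\ell\|_{L^2(\Omega)}$ times the scaled jump seminorm (Cauchy--Schwarz plus a trace--Poincar\'e inequality), the boundary-edge terms by $\operatorname{pen}(v_\ell)$ via an inverse estimate, and the volume term by $\|\wv_\ell\|_{L^2(\Omega)}$ times the broken $L^2$ norm of $\varDelta\wv_\ell$. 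One then uses $\|\wv_\ell\|_{L^2(\Omega)}\lesssim \|\nabla\wv_\ell\|_{L^2(\Omega)}+\|\wv_\ell\|_{L^2(\partial\Omega)}$ together with Young's inequality to absorb $\varepsilon\|\nabla\wv_\ell\|^2_{L^2(\Omega)}$ into the left-hand side, the boundary trace being controlled by the boundary penalty. No broken Poincar\'e inequality for discontinuous functions, duality argument, or compactness is needed; carrying out this computation is what is required to close the proof.
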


\begin{proof}
We first bound the consistency term from below using Cauchy–Schwarz and an inverse inequality:
\begin{equation}\label{cons_bounds}
\begin{aligned}
-2\sum_{e \in E^i_{h(\ell)}} \int_e \dgal{\varDelta \wv_\ell + f_\ell} \jumpop{ \nabla \wv_\ell \cdot n }
\ge & -c_1 \left( \sum_{K \in T_{h(\ell)}} \int_K | \varDelta \wv_\ell + f_\ell |^2 \right)^{1/2}
\\
&\qquad \qquad \times \left( \sum_{e \in E^i_{h(\ell)}} \frac{1}{h_e} \int_e | \jumpop{ \nabla \wv_\ell } |^2 \right)^{1/2} \\
\ge & -\frac{c_1 \delta}{2} \sum_{K \in T_{h(\ell)}} \int_K | \varDelta \wv_\ell + f_\ell |^2
- \frac{c_1}{2\delta} \sum_{e \in E^i_{h(\ell)}} \frac{1}{h_e} \int_e | \jumpop{ \nabla \wv_\ell } |^2,
\end{aligned}
\end{equation}
for any $\delta \in (0,1)$ and a constant $c_1 > 0$. Choosing $\delta$ and $\alpha$ such that $1 - \frac{c_1 \delta}{2} > 0$ and $\alpha - \frac{c_1}{2\delta} > 0$, we obtain
\begin{equation}\label{eq:coerv1stbnd}
\mathcal{E}_\ell(v_\ell) \ge \left(1 - \frac{c_1 \delta}{2}\right) \sum_{K \in T_{h(\ell)}} \int_K | \varDelta \wv_\ell + f_\ell |^2 + \left( \alpha - \frac{c_1}{2\delta} \right) \operatorname{pen}(v_\ell) \ge 0.
\end{equation}
Hence, the bound \eqref{EquicoercivityofEdelta1(2)} implies uniform control over $\int_K |\varDelta \wv_\ell|^2$ and $\operatorname{pen}(v_\ell)$.

To estimate $\| \wv_\ell \|_{H^1(\Omega)}$, we integrate by parts elementwise:
\begin{equation}\label{eq:H1_bound}
\int_\Omega | \nabla \wv_\ell |^2 = \sum_{e \in E_{h(\ell)}} \int_e \wv_\ell \jumpop{ \nabla \wv_\ell \cdot n } + \sum_{K \in T_{h(\ell)}} \int_K \wv_\ell \varDelta \wv_\ell.
\end{equation}
For the boundary terms, observe that since the boundary conditions are weakly imposed and $g=0$, we have $\| \wv_\ell \|^2_{L^2(\partial \Omega)} \lesssim h$. Using Cauchy–Schwarz and inverse estimates:
\begin{equation}
\sum_{e \in E^b_{h(\ell)}} \int_e | \wv_\ell \nabla \wv_\ell \cdot n | 
\lesssim \sum_{e \in E^b_{h(\ell)}} \| \wv_\ell \|_{L^2(e)} \| \nabla \wv_\ell \|_{L^2(e)} 
\lesssim \sum_{e \in E^b_{h(\ell)}} \frac{1}{h_e} \| \wv_\ell \|^2_{L^2(e)} 
\le \operatorname{pen}(v_\ell).
\end{equation}

For the remaining terms in \eqref{eq:H1_bound}, we use the Cauchy–Schwarz and Young inequalities with a trace–Poincaré inequality (see \cite[Lemma 10.2.20]{brenner2005c}):
\begin{equation}\label{eq:H1_internalbounds}
\begin{aligned}
&\left| \sum_{e \in E^i_{h(\ell)}} \int_e \wv_\ell \jumpop{ \nabla \wv_\ell \cdot n } + \sum_{K \in T_{h(\ell)}} \int_K \wv_\ell \varDelta \wv_\ell \right| \\
  &\lesssim \| \wv_\ell \|_{L^2(\Omega)} \left[ \left( \sum_{e \in E^i_{h(\ell)}} \frac{1}{h_e} \int_e | \jumpop{ \nabla \wv_\ell } |^2 \right)^{1/2} + \left( \sum_{K \in T_{h(\ell)}} \int_K | \varDelta \wv_\ell |^2 \right)^{1/2} \right] \\
&\lesssim \| \wv_\ell \|_{L^2(\Omega)} 
\lesssim \frac{\varepsilon}{2} \| \nabla \wv_\ell \|_{L^2(\Omega)}^2 + \frac{\varepsilon}{2} \| \wv_\ell \|^2_{L^2(\partial \Omega)} + \frac{C}{2\varepsilon},
\end{aligned}
\end{equation}
for arbitrary $\varepsilon \in (0,1)$. Since $\| \wv_\ell \|^2_{L^2(\partial \Omega)}$ is uniformly bounded (via the penalty), we conclude by choosing $\varepsilon$ small enough to absorb the $H^1$ term into the left-hand side.
\end{proof}

\subsection{Convergence of the Discrete Minimisers}

In this section, we prove that minimisers of the discrete functional $\mathcal{E}_\ell$ converge (up to interpolation) to the unique minimiser of the continuous functional $\mathcal{E}$, as the discretisation parameter $\ell \to \infty$.

The key analytical tool is $\Gamma$-convergence, specifically the standard $\liminf$–$\limsup$ framework. We show that the interpolated discrete minimisers $I_{\S^q_{h(\ell)}} u_\ell$ converge weakly in $H^1(\Omega)$ to the exact solution $u$ of the continuous variational problem, provided the approximation parameters are scaled appropriately.

\begin{theorem}[Convergence of discrete minimisers]
\label{Thrm:Gamma_funct}
Let $\mathcal{E}$ and $\mathcal{E}_\ell$ be the energy functionals defined in \eqref{energy_fun} and \eqref{E_ell}, respectively, and let $f \in C^0(\bar\Omega)$. Let $u_\ell \in V_\ell$ be a minimiser of $\mathcal{E}_\ell$, and define the interpolated function $\widehat u_\ell := I_{\S^q_{h(\ell)}} u_\ell$.

Let
\[
\beta_\ell := \max \left\{ \tilde \beta^{[1, 0, 2]}_\ell,\, \tilde \beta^{[2, 1, 2]}_\ell,\, \tilde \beta^{[3, 2, 2]}_\ell,\, \tilde \beta^{[4, 2, 2]}_\ell \right\},
\]
and suppose that the finite element parameters satisfy $h(\ell) = c \beta_\ell$ for some $c > 0$, and further that
\[
\underline h_{E, \ell}^{-1/2} \left( \tilde \beta^{[2,0,\infty]}_\ell \right)^{1 - 2\epsilon} \le C
\]
for all $\ell$, where $\underline h_{E, \ell} := \min_{e \in E^b_{h(\ell)}} h_e$.

Then, as $\ell \to \infty$,
\begin{equation}
\widehat u_\ell \rightharpoonup u \quad \text{in } H^1(\Omega), \quad \text{and} \quad
\widehat u_\ell \to u \quad \text{in } L^2(\Omega),
\end{equation}
where $u$ is the unique minimiser of the continuous energy $\mathcal{E}$, and in particular satisfies $u \in V(\Omega) = \{ v \in H_0^1(\Omega)\,:\, \Delta v \in L^2(\Omega) \}$.
\end{theorem}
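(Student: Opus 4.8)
The plan is to follow the classical $\liminf$--$\limsup$ (De Giorgi) recipe for convergence of minimisers, using Proposition~\ref{Prop:EquicoercivityofE(2)} for compactness, Lemma~\ref{Lem:dlapl_weak_conv} for the lower bound, and Proposition~\ref{prop:jump_vanish} together with the abstract neural network approximation property \eqref{w_ell_7_hSm} for the recovery (upper bound) sequence. First I would extract compactness: since $u_\ell$ minimises $\mathcal{E}_\ell$, and a recovery sequence (built below) gives $\mathcal{E}_\ell(u_\ell) \le \mathcal{E}_\ell(w_\ell) \le C$ uniformly, Proposition~\ref{Prop:EquicoercivityofE(2)} yields a uniform bound on $\sum_K \int_K |\varDelta \widehat u_\ell|^2 + \|\widehat u_\ell\|_{H^1(\Omega)}^2 + \operatorname{pen}(u_\ell)$. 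Hence along a subsequence $\widehat u_\ell \rightharpoonup u$ in $H^1(\Omega)$ and (by Rellich) strongly in $L^2(\Omega)$; the weak boundary penalty plus $\|\widehat u_\ell\|_{L^2(\partial\Omega)}^2 \lesssim h$ forces $u \in H^1_0(\Omega)$, and Lemma~\ref{Lem:dlapl_weak_conv} gives $L_h(\widehat u_\ell) \rightharpoonup \varDelta u$ in $L^2(\Omega)$, so in particular $u \in V(\Omega)$.

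Next I would establish the \emph{$\liminf$ inequality}: using the lifting form \eqref{E_h2}, $\mathcal{E}_\ell(u_\ell) \ge \int_\Omega |L_h(\widehat u_\ell) + f_\ell|^2 - \int_\Omega |R_h(\nabla \widehat u_\ell)|^2 + \alpha\operatorname{pen}(u_\ell)$. Since $f_\ell = I_{\S^q_h} f \to f$ in $L^2$ (as $f \in C^0$), weak lower semicontinuity of the $L^2$-norm under the weak convergence $L_h(\widehat u_\ell) + f_\ell \rightharpoonup \varDelta u + f$ gives $\liminf_\ell \int_\Omega |L_h(\widehat u_\ell)+f_\ell|^2 \ge \int_\Omega |\varDelta u + f|^2 = \mathcal{E}(u)$; the penalty term is nonnegative, so the only point requiring care is showing the lifting term $\int_\Omega |R_h(\nabla \widehat u_\ell)|^2 \to 0$. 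This follows from the lifting bound \eqref{eq:Rhbound}, $\int_\Omega |R_h(\nabla \widehat u_\ell)|^2 \le C_R \operatorname{pen}(u_\ell)$, \emph{provided} $\operatorname{pen}(u_\ell) \to 0$; I would obtain this from \eqref{eq:coerv1stbnd}, which shows $\left(\alpha - \tfrac{c_1}{2\delta}\right)\operatorname{pen}(u_\ell) \le \mathcal{E}_\ell(u_\ell) - (1 - \tfrac{c_1\delta}{2})\sum_K\int_K|\varDelta\widehat u_\ell + f_\ell|^2$, so once we know $\mathcal{E}_\ell(u_\ell) \to \mathcal{E}(u)$ from above and below the penalty is squeezed to zero — alternatively, one chooses $\alpha = \alpha(\ell) \to \infty$ slowly, or argues that $\limsup_\ell \mathcal{E}_\ell(u_\ell) \le \mathcal{E}(u)$ combined with coercivity already forces $\operatorname{pen}(u_\ell)\to 0$. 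I would structure this as a small bootstrap: upper bound $\Rightarrow$ penalty bound $\Rightarrow$ lifting vanishes $\Rightarrow$ $\liminf$ inequality.

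For the \emph{$\limsup$ (recovery) inequality} I would take $w = u$, the exact solution, which by elliptic regularity (smooth $\partial\Omega$, $f \in C^0$, and assuming enough regularity, $u \in H^s$ with $s>2$, e.g. $s=4$) lies in the spaces for which \eqref{w_ell_7_hSm} applies; pick $w_\ell \in V_\ell$ with $\|w_\ell - u\|_{W^{s,p}} \le \tilde\beta_\ell^{[m,s,p]}|u|_{W^{m,p}}$ for the relevant $(m,s,p)$ triples appearing in the definition of $\beta_\ell$. Then I would bound $\mathcal{E}_\ell(w_\ell)$ term by term: the leading term $\sum_K\int_K|\varDelta I_{\S^q_h}w_\ell + I_{\S^q_h}f|^2$ is controlled by $\mathcal{E}(u) + O(\beta_\ell) + O(h^{s-2}) \to \mathcal{E}(u)$ using interpolation estimates \eqref{error_est} and the triangle inequality; the consistency cross-term and the jump penalty term on internal edges are handled \emph{exactly} by Proposition~\ref{prop:jump_vanish} (applied to $v = w_\ell$, noting $\jumpop{\nabla u \cdot n} = 0$ so the jumps are $O(h^{s-2})$-small), while the boundary penalty term is controlled using the trace estimate \eqref{ineq:error_on_e} and the scaling hypothesis $\underline h_{E,\ell}^{-1/2}(\tilde\beta^{[2,0,\infty]}_\ell)^{1-2\epsilon} \le C$ which is precisely what makes $\underline h_{E,\ell}^{-1}\|I_{\S^q_h}w_\ell\|_{L^2(\partial\Omega)}^2$ bounded (indeed vanishing). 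Choosing $h(\ell) = c\beta_\ell$ synchronises all the error terms so that $\limsup_\ell \mathcal{E}_\ell(w_\ell) \le \mathcal{E}(u)$. Combining: $\mathcal{E}(u) \le \liminf_\ell \mathcal{E}_\ell(u_\ell) \le \limsup_\ell \mathcal{E}_\ell(u_\ell) \le \limsup_\ell \mathcal{E}_\ell(w_\ell) \le \mathcal{E}(u)$, so $u$ minimises $\mathcal{E}$; by strict convexity of $\mathcal{E}$ on $V(\Omega)$ the minimiser is unique, hence the whole sequence (not just a subsequence) converges, which upgrades the subsequential limits to full convergence.

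The main obstacle I anticipate is the recovery-sequence boundary term: controlling $\sum_{e \in E^b_h} h_e^{-1}\int_e |I_{\S^q_h}w_\ell|^2$ requires $w_\ell$ to be genuinely small \emph{on $\partial\Omega$} even though neural network functions do not satisfy the boundary condition exactly, and the trace estimate loses a half power of $h_e$; this is exactly why the somewhat unusual hypothesis involving $\underline h_{E,\ell}$ and the $L^\infty$-type approximation rate $\tilde\beta^{[2,0,\infty]}_\ell$ with the exponent $1-2\epsilon$ appears, and making that bookkeeping rigorous — tracking which norm of $u - w_\ell$ on the boundary is needed and verifying the stated scaling indeed kills the term — is the delicate point. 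A secondary technical nuisance is justifying that $\operatorname{pen}(u_\ell) \to 0$ rather than merely staying bounded (needed for both the lifting term in the $\liminf$ step and to conclude $u \in H^1_0$), which I would resolve via the bootstrap described above, i.e. first prove the two-sided energy convergence and then read off the penalty decay from the coercivity estimate \eqref{eq:coerv1stbnd}.
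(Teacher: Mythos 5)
Your overall architecture is the paper's: minimality against a recovery sequence gives boundedness of $\mathcal{E}_\ell(u_\ell)$, Proposition~\ref{Prop:EquicoercivityofE(2)} gives compactness, Lemma~\ref{Lem:dlapl_weak_conv} identifies the weak limit of the discrete Laplacian, and the $\liminf$--$\limsup$ sandwich plus uniqueness upgrades subsequential to full convergence. Two of your steps, however, diverge from the paper in ways that matter. First, in the $\liminf$ step you insist on proving $\operatorname{pen}(u_\ell)\to 0$ so that the lifting term $\int_\Omega|R_h(\nabla\widehat u_\ell)|^2$ vanishes, and your proposed bootstrap is circular as stated: you want to deduce the penalty decay from two-sided energy convergence, but the lower half of that convergence is exactly the $\liminf$ inequality you are trying to establish. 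The paper avoids this entirely: writing $\mathcal{E}_\ell$ in the lifted form \eqref{E_h2} and using \eqref{eq:Rhbound}, one has $-\int_\Omega|R_h(\nabla\widehat v_\ell)|^2+\alpha\operatorname{pen}(v_\ell)\ge(\alpha-C_R)\operatorname{pen}(v_\ell)\ge 0$ for $\alpha>C_R$, so the $\liminf$ inequality follows from weak lower semicontinuity of the $L^2$-norm alone, with no need for the penalty to vanish. You cite precisely the bound $\int|R_h|^2\le C_R\operatorname{pen}$ but do not make this combination, which is the clean resolution.

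Second, and more seriously, your recovery sequence assumes $u\in H^4(\Omega)$ ``by elliptic regularity'', which does not follow from $f\in C^0(\bar\Omega)$; the theorem only guarantees $u\in V(\Omega)$, i.e.\ $u\in H^2(\Omega)$ after elliptic regularity. The abstract approximation property \eqref{w_ell_7_hSm} with the rates $\tilde\beta^{[4,3,2]}_\ell$ etc.\ cannot be applied directly to $u$. The paper's Lemma~\ref{lem:limsup} instead inserts an intermediate mollification: a smooth $w_\delta$ with $w_\delta|_{\partial\Omega}=0$, $\|w-w_\delta\|_{H^1}+\|\Delta w-\Delta w_\delta\|_{L^2}\lesssim\delta$ and the controlled blow-up $|w_\delta|_{H^s}\lesssim\delta^{1-s}$ (Remark~\ref{smoothing}), then approximates $w_\delta$ by a network $w_{\ell,\delta}$ and balances the two parameters via $\delta=h(\ell)^{1/4}$, $h(\ell)=c\beta_\ell$. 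This two-parameter bookkeeping --- where every error term carries a negative power of $\delta$ that must be beaten by a positive power of $h(\ell)$ --- is the technical core of the $\limsup$ construction and is absent from your proposal; without it (or the unjustified extra regularity) the recovery sequence does not exist for general $w\in V(\Omega)$, and in particular not for $u$ itself.
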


The proof of Theorem~\ref{Thrm:Gamma_funct} follows the standard strategy of $\Gamma$-convergence, and is structured around the following Lemma~\ref{lem:liminf}, showing lower semicontinuity and a $\liminf$ inequality, and Lemma \ref{lem:limsup} giving a proof of the $\limsup$ inequality.

\begin{lemma}[Lower semicontinuity and $\liminf$ inequality]
\label{lem:liminf}
Let $v \in V(\Omega)$ and suppose that a sequence $(v_\ell)$ with $v_\ell \in V_\ell$ satisfies
\[
\widehat v_\ell := I_{\S^q_{h(\ell)}} v_\ell \rightharpoonup v \quad \text{in } H^1(\Omega).
\]
Then
\begin{equation} \label{liminf_Th}
\mathcal{E}(v) \leq \liminf_{\ell \rightarrow \infty} \mathcal{E}_\ell(v_\ell).
\end{equation}
\end{lemma}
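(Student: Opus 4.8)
The plan is to establish the $\liminf$ inequality by identifying the weak $L^2$-limit of the discrete Laplacian $L_h(\widehat v_\ell)$ and then exploiting weak lower semicontinuity of the $L^2$-norm together with the fact that the penalty and the lifting terms in \eqref{E_h2} are controlled (and that the lifting term, up to sign, is dominated by the penalty). First I would note that we may assume $\liminf_\ell \mathcal{E}_\ell(v_\ell) < \infty$, passing to a subsequence attaining the $\liminf$; then Proposition~\ref{Prop:EquicoercivityofE(2)} applies and yields the uniform bound \eqref{EquicoercivityofEdelta2(2)}, i.e.\ uniform control of $\sum_K \int_K |\varDelta \widehat v_\ell|^2$, of $\|\widehat v_\ell\|_{H^1(\Omega)}$, and of $\operatorname{pen}(v_\ell)$. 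In particular the hypothesis \eqref{eq:lem_uniformbnd} of Lemma~\ref{Lem:dlapl_weak_conv} is satisfied, and since $\widehat v_\ell \rightharpoonup v$ in $H^1(\Omega)$ with $v \in V(\Omega)$, that lemma gives $L_h(\widehat v_\ell) \rightharpoonup \varDelta v$ in $L^2(\Omega)$.

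Next I would also need that the interpolated data converge: $f_\ell := I_{\S^q_{h(\ell)}} f \to f$ strongly in $L^2(\Omega)$, which follows from \eqref{error_est} and $f \in C^0(\bar\Omega)$ (approximating $f$ in $L^2$ by smooth functions, or invoking uniform continuity on the compact $\bar\Omega$). Hence $L_h(\widehat v_\ell) + f_\ell \rightharpoonup \varDelta v + f$ in $L^2(\Omega)$. Now rewrite, using \eqref{E_h2},
\[
\mathcal{E}_\ell(v_\ell) = \int_\Omega \bigl| L_h(\widehat v_\ell) + f_\ell \bigr|^2 - \int_\Omega \bigl| R_h(\nabla \widehat v_\ell) \bigr|^2 + \alpha\, \operatorname{pen}(v_\ell).
\]
By weak lower semicontinuity of the $L^2$-norm,
\[
\int_\Omega |\varDelta v + f|^2 \le \liminf_{\ell\to\infty} \int_\Omega \bigl| L_h(\widehat v_\ell) + f_\ell \bigr|^2,
\]
and the left-hand side is exactly $\mathcal{E}(v)$. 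It therefore remains to show that the leftover term $-\int_\Omega |R_h(\nabla \widehat v_\ell)|^2 + \alpha\,\operatorname{pen}(v_\ell)$ does not contribute negatively in the limit; by \eqref{eq:Rhbound} one has $\int_\Omega |R_h(\nabla \widehat v_\ell)|^2 \le C_R \sum_{e\in E^i_h} h_e^{-1}\int_e |\jumpop{\nabla\widehat v_\ell}|^2 \le C_R\,\operatorname{pen}(v_\ell)$, so for $\alpha \ge C_R$ the quantity $\alpha\,\operatorname{pen}(v_\ell) - \int_\Omega |R_h(\nabla\widehat v_\ell)|^2$ is nonnegative for every $\ell$, and can simply be dropped from the $\liminf$. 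Combining the three observations gives $\mathcal{E}(v) \le \liminf_\ell \mathcal{E}_\ell(v_\ell)$.

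The main obstacle, and the point requiring care, is the handling of the indefinite consistency/lifting term: one cannot separately take $\liminf$ of a difference, so the argument must package the negative lifting term with the penalty and use the sign condition $\alpha \ge C_R$ (consistent with the "sufficiently large $\alpha$" hypothesis already present in Proposition~\ref{Prop:EquicoercivityofE(2)}) to discard it nonnegatively, rather than trying to pass to the limit in it. A secondary technical point is justifying $L_h(\widehat v_\ell)\rightharpoonup \varDelta v$: this is Lemma~\ref{Lem:dlapl_weak_conv}, but one must first secure its hypothesis via the equicoercivity bound and verify $v\in V(\Omega)$, the latter being part of the statement's standing assumption. No compactness beyond the given weak $H^1$ convergence is needed, since the $\liminf$ inequality only requires lower semicontinuity along the prescribed sequence.
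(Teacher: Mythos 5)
Your proposal is correct and follows essentially the same route as the paper: reduce to a bounded subsequence, invoke the equicoercivity of Proposition~\ref{Prop:EquicoercivityofE(2)} to activate Lemma~\ref{Lem:dlapl_weak_conv}, pass to the weak $L^2$ limit $L_h(\widehat v_\ell)+f_\ell \rightharpoonup \varDelta v + f$, apply weak lower semicontinuity of the $L^2$-norm, and discard the nonnegative combination $\alpha\,\operatorname{pen}(v_\ell)-\int_\Omega|R_h(\nabla\widehat v_\ell)|^2$ using \eqref{eq:Rhbound} and $\alpha \ge C_R$. Your explicit handling of the sign of the lifting term is in fact cleaner than the paper's display \eqref{eq:liminf1stbnd}, whose first line carries a sign typo on the $R_h$ term relative to \eqref{E_h2}.
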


\begin{proof}
Assume there exists a subsequence, still denoted by $v_\ell$, such that $\mathcal{E}_\ell(v_\ell) \leq C$ uniformly in $\ell$; otherwise the inequality holds trivially. Then the stability result of Proposition~\ref{Prop:EquicoercivityofE(2)} implies the uniform bounds of~\eqref{EquicoercivityofEdelta2(2)}.

These bounds, together with $\widehat v_\ell \rightharpoonup v$ in $H^1(\Omega)$, imply that
\[
L_{h(\ell)}(\widehat v_\ell) \rightharpoonup \Delta v, \quad \text{in } L^2(\Omega),
\]
by Lemma~\ref{Lem:dlapl_weak_conv}. Since $f_\ell \to f$ in $L^2(\Omega)$, we also have
\[
L_{h(\ell)}(\widehat v_\ell) + f_\ell \rightharpoonup \Delta v + f.
\]
By the weak lower semicontinuity of the $L^2$ norm,
\[
\liminf_{\ell \to \infty} \|L_{h(\ell)}(\widehat v_\ell) + f_\ell\|^2 \geq \int_\Omega |\Delta v + f|^2 = \mathcal{E}(v).
\]

To complete the argument, we estimate the full energy:
\begin{equation} \label{eq:liminf1stbnd}
\begin{aligned}
\liminf_{\ell \to \infty} \mathcal{E}_\ell(v_\ell)
&= \liminf_{\ell \to \infty} \left[ \|L_{h(\ell)}(\widehat v_\ell) + f_\ell\|^2 + |R_h(\nabla \widehat v_\ell)|^2 + \alpha\,\text{pen}(\widehat v_\ell) \right] \\
&\ge \liminf_{\ell \to \infty} \left[ \|L_{h(\ell)}(\widehat v_\ell) + f_\ell\|^2 + (\alpha - C_R)\,\text{pen}(\widehat v_\ell) \right] \\
&\ge \mathcal{E}(v),
\end{aligned}
\end{equation}
provided that $\alpha > C_R$.
\end{proof}

\begin{lemma}[$\limsup$ inequality via recovery sequence]
\label{lem:limsup}
For every $w \in V(\Omega)$, there exists a sequence $(w_\ell)$ with $w_\ell \in V_\ell$ such that
\[
\mathcal{E}(w) = \lim_{\ell \to \infty} \mathcal{E}_\ell(w_\ell).
\]
\end{lemma}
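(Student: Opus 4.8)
The plan is to build the recovery sequence in two stages: first approximate $w \in V(\Omega)$ by a smooth-enough function, then apply the neural-network approximation hypothesis \eqref{w_ell_7_hSm} together with the finite-element interpolation estimates of Proposition~\ref{prop:jump_vanish}. Since $V(\Omega) = \{v \in H^1_0(\Omega) : \Delta v \in L^2(\Omega)\}$ does not by itself give the $H^s$-regularity ($s > 2$) needed for the jump estimates, the first step is a density argument: given $\varepsilon > 0$, choose $\tilde w \in H^s(\Omega) \cap H^1_0(\Omega)$ with $s > 2$ (e.g.\ $s = 3$) such that $\|\Delta(\tilde w - w)\|_{L^2(\Omega)} < \varepsilon$ and $\|\tilde w - w\|_{H^1(\Omega)} < \varepsilon$; this is possible because smooth functions vanishing on $\partial\Omega$ are dense in $V(\Omega)$ in the graph norm $\|v\|_{H^1} + \|\Delta v\|_{L^2}$ (for $\Omega$ with smooth boundary this follows from elliptic regularity / standard mollification). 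Continuity of $\mathcal{E}$ in this graph norm then means it suffices to produce, for each such smooth $\tilde w$, a sequence $w_\ell \in V_\ell$ with $\mathcal{E}_\ell(w_\ell) \to \mathcal{E}(\tilde w)$, and then diagonalise.

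For fixed smooth $\tilde w \in H^s(\Omega)$, $s>2$, I would take $w_\ell \in V_\ell$ to be the neural-network near-best approximation from \eqref{w_ell_7_hSm}, so that $\|w_\ell - \tilde w\|_{W^{s',p}} \le \tilde\beta_\ell^{[\cdot]}|\tilde w|_{W^{m,p}}$ for the relevant $(s',m,p)$ — in particular $w_\ell \to \tilde w$ in $H^2(\Omega)$, and (using $\tilde\beta^{[2,0,\infty]}_\ell$) in $W^{2,\infty}$ with a rate. Set $\widehat w_\ell = I_{\S^q_{h(\ell)}} w_\ell$ and $f_\ell = I_{\S^q_{h(\ell)}} f$. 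I then expand $\mathcal{E}_\ell(w_\ell)$ using the original form \eqref{E_h}: the bulk term $\sum_K \int_K |\Delta \widehat w_\ell + f_\ell|^2$ converges to $\int_\Omega |\Delta \tilde w + f|^2 = \mathcal{E}(\tilde w)$ by the triangle inequality, the interpolation estimate \eqref{error_est} applied to $\tilde w$ (controlling $\Delta(\widehat w_\ell - I_{\S^q_h}\tilde w)$ via an inverse inequality and $\|w_\ell - \tilde w\|_{H^2}$), and $f_\ell \to f$ in $L^2$. The consistency term and the penalty term must both vanish: by Proposition~\ref{prop:jump_vanish} applied to $\tilde w$ we have $\mathrm{pen}(I_{\S^q_h}\tilde w) \lesssim h^{2s-4}|\tilde w|^2_{H^s} + h\|\cdot\| \to 0$ and the consistency term $\lesssim h^{s-2}(\cdots) \to 0$; for $\widehat w_\ell$ one adds the perturbation $w_\ell - \tilde w$, whose contribution to the jump and boundary terms is controlled by the penalty-type seminorm of $I_{\S^q_h}(w_\ell - \tilde w)$, bounded using inverse inequalities by $h(\ell)^{-2}\|w_\ell - \tilde w\|^2_{L^2}$ or $h(\ell)^{-1}\|w_\ell-\tilde w\|^2_{H^1}$ type quantities — and here the scaling hypotheses $h(\ell) = c\beta_\ell$ and $\underline h_{E,\ell}^{-1/2}(\tilde\beta^{[2,0,\infty]}_\ell)^{1-2\epsilon} \le C$ are exactly what forces these to stay bounded and in fact tend to zero. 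Hence $\mathcal{E}_\ell(w_\ell) \to \mathcal{E}(\tilde w)$.

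The main obstacle is the bookkeeping of competing powers of $h(\ell)$ in the jump/boundary terms when the argument is the \emph{neural-network} function $w_\ell$ rather than the smooth target: interpolation of $w_\ell$ is only as good as $w_\ell$'s own smoothness, so one cannot directly invoke \eqref{error_est} with a high $s$ for $w_\ell$, and must instead split $\widehat w_\ell = I_{\S^q_h}\tilde w + I_{\S^q_h}(w_\ell - \tilde w)$, estimate the first piece by Proposition~\ref{prop:jump_vanish} and the second by inverse inequalities in terms of $\|w_\ell - \tilde w\|$ in low-order norms times negative powers of $h(\ell)$. Getting these negative powers to be beaten by the approximation rates $\tilde\beta_\ell$ is precisely the role of the coupling $h(\ell)=c\beta_\ell$ (and the boundary-edge condition), so the delicate point is to verify that with this coupling every error term is $o(1)$; once that is done, the density/diagonalisation wrapper is routine. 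Finally, combining with Lemma~\ref{lem:liminf} and equicoercivity (Proposition~\ref{Prop:EquicoercivityofE(2)}) in the usual $\Gamma$-convergence manner yields Theorem~\ref{Thrm:Gamma_funct}: the interpolated minimisers are bounded, hence weakly convergent along a subsequence to some $v$; the $\liminf$ inequality plus the $\limsup$/recovery sequence force $v$ to minimise $\mathcal{E}$, which is strictly convex so $v=u$ and the whole sequence converges.
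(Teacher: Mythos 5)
Your plan is correct and follows essentially the same route as the paper: a two-stage approximation (mollify $w$ to a smooth $w_\delta$ vanishing on $\partial\Omega$, then take the neural-network approximant of $w_\delta$ from \eqref{w_ell_7_hSm}), followed by interpolation into $\S^q_{h(\ell)}$, convergence of the bulk term, and vanishing of the consistency and penalty terms. Two points of divergence are worth recording. First, for the jump and consistency terms the paper does not split $\widehat w_{\ell,\delta}=I_{\S^q_h}w_\delta+I_{\S^q_h}(w_{\ell,\delta}-w_\delta)$ and invoke inverse inequalities on the perturbation; instead it assumes $w_{\ell,\delta}\in V_\ell\cap H^3(\Omega)$, applies Proposition~\ref{prop:jump_vanish} and \eqref{error_est} directly to $w_{\ell,\delta}$, and controls $|w_{\ell,\delta}|_{H^2}$, $|w_{\ell,\delta}|_{H^3}$ by the triangle inequality through \eqref{nnd_approx}. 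Your inverse-inequality variant would reach the same conclusion but trades $h^{-1}$, $h^{-2}$ factors against $\|w_\ell-\tilde w\|_{H^1}$, $\|w_\ell-\tilde w\|_{L^2}$, which again reduces to the same coupling $h(\ell)\sim\beta_\ell$. Second, and more substantively, the step you describe as a ``routine density/diagonalisation wrapper'' is where the real work lies: the neural-network error at stage $\ell$ is $\tilde\beta_\ell^{[s,s-1,2]}|w_\delta|_{H^s}$, and the higher seminorms of the mollification blow up as $\delta\to 0$. The paper therefore needs the quantitative estimate $|w_\delta|_{H^s(\Omega)}\lesssim\delta^{1-s}$ (justified in Remark~\ref{smoothing}, together with the odd-reflection construction guaranteeing $w_\delta=0$ on $\partial\Omega$) and the explicit three-way balance $h(\ell)=\beta_\ell$, $\delta=h(\ell)^{1/4}$ to make every term $o(1)$, as in \eqref{uhd_high_order}--\eqref{suppen4}. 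You correctly identify this balancing as ``the delicate point'' but leave it unverified; without the $\delta^{1-s}$ blow-up rate the diagonalisation cannot be called routine, since for fixed $\ell$ the recovery error degenerates as the mollification parameter shrinks. With that estimate supplied, your argument closes and coincides with the paper's.
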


\begin{proof}
Let $w \in V(\Omega)$ be arbitrary. For each $\delta > 0$, choose a smooth function $w_\delta \in C^m(\overline{\Omega})$, $m \geq 4$, with $\restr{w_\delta}{\partial \Omega} = 0$, see Remark \ref{smoothing},  such that
\begin{equation}
\|w - w_\delta\|_{H^1(\Omega)} + \|\Delta w - \Delta w_\delta\|_{L^2(\Omega)} \lesssim \delta,
\quad
|w_\delta|_{H^s(\Omega)} \lesssim \delta^{1-s}.
\end{equation}
Then for each $\ell$, there exists $w_{\ell,\delta} \in V_\ell \cap H^3(\Omega)$ such that
\begin{equation} \label{nnd_approx}
\|w_{\ell,\delta} - w_\delta\|_{H^{s-1}(\Omega)} \leq \tilde\beta_\ell^{[s,s-1,2]}\, |w_\delta|_{H^s(\Omega)}
\leq \tilde\beta_\ell^{[s,s-1,2]} \delta^{1-s},
\end{equation}
and $\tilde\beta_\ell^{[s,s-1,2]} \to 0$ as $\ell \to \infty$ for all $s \le 4$.

Let $\widehat w_{\ell,\delta} := I_{\S^q_{h(\ell)}} w_{\ell,\delta}$. We aim to show
\begin{equation}
\big| \| \Delta w_{\ell,\delta} + f_\ell \|_{L^2}^2 - \mathcal{E}_\ell(w_{\ell,\delta}) \big| \to 0,
\qquad
\big| \mathcal{E}(w) - \| \Delta w_{\ell,\delta} + f_\ell \|_{L^2}^2 \big| \to 0,
\end{equation}
as $\ell \to \infty$. This proves the $\limsup$ inequality with $w_\ell := w_{\ell,\delta}$.

Using~\eqref{nnd_approx} and the approximation bounds from~\eqref{error_est}, we obtain:
\begin{equation}
  \label{eq:L2w_ld}
  \| \Delta w_{\ell,\delta} + f_\ell \|_{L^2(\Omega)} \lesssim \frac{\tilde\beta^{[3,2,2]}}{\delta^2} + \frac{1}{\delta} + 1.
\end{equation}
We then estimate the difference using $||a| -|b|| \le |a -b|$
\begin{equation}\label{dif_LL2norms}
\begin{aligned}
&\big| \| \Delta w_{\ell,\delta} + f_\ell \|_{L^2(K)}^2 
    - \| \Delta \widehat w_{\ell,\delta} + f_\ell \|_{L^2(K)}^2 \big| \\
&\quad= \big| \| \Delta w_{\ell,\delta} + f_\ell \|_{L^2(K)} 
         - \| \Delta \widehat w_{\ell,\delta} + f_\ell \|_{L^2(K)} \big| \\
&\qquad\quad \times \big( \| \Delta w_{\ell,\delta} + f_\ell \|_{L^2(K)} 
         + \| \Delta \widehat w_{\ell,\delta} + f_\ell \|_{L^2(K)} \big) \\
&\quad\lesssim \| \Delta w_{\ell,\delta} - \Delta \widehat w_{\ell,\delta} \|_{L^2(K)}  
        \big( \| \Delta w_{\ell,\delta} + f_\ell \|_{L^2(K)} 
         + \| \Delta w_{\ell,\delta} - \Delta \widehat w_{\ell,\delta} \|_{L^2(K)} \big) \\
&\quad\lesssim h(\ell) \, |w_{\ell,\delta}|_{H^3(K)} 
        \cdot \| \Delta w_{\ell,\delta} + f_\ell \|_{L^2(K)} 
        + h(\ell)^2 \, |w_{\ell,\delta}|_{H^3(K)}^2.
\end{aligned}
\end{equation}

Summing over $K \in T_h(\ell)$, and using the Cauchy–Schwarz inequality, \eqref{nnd_approx}, and \eqref{eq:L2w_ld}, we obtain the upper bound
\begin{equation}
\begin{aligned}
\bigg| \sum_{K \in T_{h(\ell)}} \Big( &\| \Delta w_{\ell, \delta} + f_\ell \|^2_{L^2(K)} 
- \| \Delta \widehat w_{\ell, \delta} + f_\ell \|^2_{L^2(K)} \Big) \bigg| 
\\
&\le \sum_{K \in T_{h(\ell)}} \big| \| \Delta w_{\ell, \delta} + f_\ell \|^2_{L^2(K)} 
- \| \Delta \widehat w_{\ell, \delta} + f_\ell \|^2_{L^2(K)} \big| 
\\
&\le \sum_{K \in T_{h(\ell)}} h(\ell) |w_{\ell, \delta}|_{H^3(K)} 
\| \Delta w_{\ell, \delta} + f_\ell \|_{L^2(K)} 
+ h^2(\ell) |w_{\ell, \delta}|_{H^3(\Omega)}^2 
\\
&\le h(\ell) |w_{\ell, \delta}|_{H^3(\Omega)} \| \Delta w_{\ell, \delta} + f_\ell \|_{L^2(\Omega)} 
+ h^2(\ell) |w_{\ell, \delta}|_{H^3(\Omega)}^2 
\\
&\lesssim \frac{h(\ell)}{\delta^3} \left[ \left( \tilde \beta_\ell^{[4,3,2]} + \delta \right) 
\left( \frac{ \tilde \beta_\ell^{[3,2,2]} }{\delta^2} + \frac{1}{\delta} + 1 \right)
+ \frac{h(\ell)}{\delta^3} \left( \left( \tilde \beta_\ell^{[4,3,2]} \right)^2 + \delta^2 \right) \right].
\end{aligned}
\label{uhd_high_order}
\end{equation}

We now fix $h(\ell) = \beta_\ell$ and choose $\delta = h(\ell)^{1/4}$. With this choice, the estimate \eqref{uhd_high_order} tends to zero as $\ell \to \infty$.

It remains to show that the remaining terms of $\mathcal{E}_\ell(w_{\ell,\delta})$, namely the consistency and penalty terms in \eqref{E_h}, also vanish as $\ell \to \infty$.

\textit{Consistency term.} Using \eqref{eq:cons_bnd1} and \eqref{nnd_approx}, we estimate:
\begin{equation}
\begin{aligned}
\left| \sum_{e \in E_h^i} \int_e \dgal{ \Delta \widehat w_{\ell,\delta} + f_\ell } \, \jumpop{ \nabla \widehat w_{\ell,\delta} \cdot n } \right|
&\lesssim h(\ell) \left( |w_{\ell,\delta}|_{H^2(\Omega)} + 1 \right) |w_{\ell,\delta}|_{H^3(\Omega)} \\
&\lesssim h(\ell) \left( |w_{\ell,\delta} - w_\delta|_{H^2(\Omega)} + |w_\delta|_{H^2(\Omega)} + 1 \right) \\
&\qquad \times \left( |w_{\ell,\delta} - w_\delta|_{H^3(\Omega)} + |w_\delta|_{H^3(\Omega)} \right) \\
&\lesssim h(\ell) \left( \frac{\tilde \beta_\ell^{[3,2,2]}}{\delta^2} + \frac{1}{\delta} + 1 \right)
\left( \frac{\tilde \beta_\ell^{[4,3,2]}}{\delta^3} + \frac{1}{\delta^2} \right) \\
&\to 0 \quad \text{as } \ell \to \infty.
\end{aligned}
\end{equation}

\textit{Penalty term.} Using \eqref{eq:jumps_bnd1} and \eqref{nnd_approx}, we estimate:
\begin{equation}\label{suppen2}
\begin{aligned}
\sum_{e \in E^i_h} \frac{1}{h_e} \int_e \left| \jumpop{ \nabla \widehat w_{\ell,\delta} } \right|^2 
&\lesssim h(\ell)^2 \, |w_{\ell,\delta}|_{H^3(\Omega)}^2 \\
&\lesssim \frac{h(\ell)^2}{\delta^4} \left( \frac{ \left( \tilde \beta_\ell^{[4,3,2]} \right)^2 }{\delta^2} + 1 \right) \\
&= h(\ell) + h(\ell)^{3/4} \to 0 \quad \text{as } \ell \to \infty.
\end{aligned}
\end{equation}

\textit{Boundary penalty terms.} Since $w_\delta = 0$ on $\partial \Omega$, we use \eqref{ineq:error_on_e} and \eqref{nnd_approx} to estimate:
\begin{equation}\label{suppen4}
\begin{aligned}
\sum_{e \in E^b_{h(\ell)}} \frac{1}{h_e} \| \widehat w_{\ell,\delta} \|_{L^2(e)}^2 
&\lesssim \sum_{e \in E^b_{h(\ell)}} \frac{1}{h_e} \left( \| \widehat w_{\ell,\delta} - w_{\ell,\delta} \|_{L^2(e)}^2 
+ \| w_{\ell,\delta} - w_\delta \|_{L^2(e)}^2 \right) \\
&\lesssim h(\ell)^2 |w_{\ell,\delta}|_{H^2(\Omega)}^2 
+ \frac{1}{h(\ell)} \| w_{\ell,\delta} - w_\delta \|_{L^2(\Omega)} 
\cdot | w_{\ell,\delta} - w_\delta |_{H^1(\Omega)}.
\end{aligned}
\end{equation}

The first term vanishes as $\ell \to \infty$:
\begin{equation}
\begin{aligned}
h(\ell)^2 |w_{\ell,\delta}|_{H^2(\Omega)}^2 
&\lesssim h(\ell)^2 \left( |w_{\ell,\delta} - w_\delta|_{H^2(\Omega)}^2 + |w_\delta|_{H^2(\Omega)}^2 \right) \\
&\lesssim h(\ell)^2 \left( \left( \tilde \beta_\ell^{[3,2,2]} \right)^2 |w_\delta|_{H^3(\Omega)}^2 
+ \frac{1}{\delta^2} \right) \\
&\lesssim h(\ell)^2 \left( \frac{ \left( \tilde \beta_\ell^{[3,2,2]} \right)^2 }{\delta^4} + \frac{1}{\delta^2} \right).
\end{aligned}
\end{equation}

For the second term in \eqref{suppen4}, we estimate:
\begin{equation}
\begin{aligned}
\frac{1}{h(\ell)} \| w_{\ell,\delta} - w_\delta \|_{L^2(\Omega)} 
\cdot | w_{\ell,\delta} - w_\delta |_{H^1(\Omega)} 
&\lesssim \frac{ \tilde \beta_\ell^{[1,0,2]} \tilde \beta_\ell^{[2,1,2]} }{h(\ell)} 
\| w_\delta \|_{H^1(\Omega)} \| w_\delta \|_{H^2(\Omega)} \\
&\lesssim \frac{ \beta_\ell^2 }{ \delta h(\ell) } (\delta + 1) 
= h(\ell)^{3/4} ( h(\ell)^{1/4} + 1 ) \to 0,
\end{aligned}
\end{equation}
completing the proof.
\end{proof}

\begin{proof}[Proof of Theorem~\ref{Thrm:Gamma_funct}]
  Since $u_\ell$ is a minimiser of $\mathcal{E}_\ell$, we have for any $v_\ell \in V_\ell$,
\[
\mathcal{E}_\ell(u_\ell) \le \mathcal{E}_\ell(v_\ell).
\]
In particular, taking $v_\ell = \tilde u_\ell$, the recovery sequence associated with $w = u$, we obtain
\[
\mathcal{E}_\ell(u_\ell) \le \mathcal{E}_\ell(\tilde u_\ell).
\]
By construction, $\mathcal{E}_\ell(\tilde u_\ell) \to \mathcal{E}(u)$, and hence the sequence $(\mathcal{E}_\ell(u_\ell))$ is bounded. The stability result of Proposition~\ref{Prop:EquicoercivityofE(2)} then implies the uniform bound
\begin{equation}\label{conv_bounds}
\sum_{K \in T_{h(\ell)}} \| \Delta \widehat u_\ell \|^2_{L^2(K)} + \| \widehat u_\ell \|^2_{H^1(\Omega)} + \rj_\ell(\tilde u_\ell) + \rb_\ell(\tilde u_\ell) \le C.
\end{equation}
By reflexivity and the Banach–Alaoglu theorem, there exists $\tilde u \in H^1(\Omega)$ such that (up to a subsequence)
\[
\widehat u_\ell \rightharpoonup \tilde u \quad \text{in } H^1(\Omega).
\]
Moreover, Lemma~\ref{Lem:dlapl_weak_conv} together with \eqref{eq:dLaplace_bound} implies that
\[
L_{h(\ell)}(\widehat u_\ell) \rightharpoonup \Delta \tilde u \quad \text{in } L^2(\Omega),
\]
so $\Delta \tilde u \in L^2(\Omega)$. It remains to show that $\tilde u = u$.

From the bound on the penalty term $\operatorname{pen}(u_\ell)$, we have
\begin{align*}
\| \tilde u \|_{L^2(\partial \Omega)} 
&\le \| \tilde u - \widehat u_\ell \|_{L^2(\partial \Omega)} + \| \widehat u_\ell \|_{L^2(\partial \Omega)} \\
&\lesssim \| \tilde u - \widehat u_\ell \|_{L^2(\Omega)} \| \tilde u - \widehat u_\ell \|_{H^1(\Omega)} + h(\ell) \to 0,
\end{align*}
as $\ell \to \infty$, using that weak convergence in $H^1(\Omega)$ implies strong convergence in $L^2(\Omega)$, cf.~\cite[Thm.~9.16]{brezis2010functional}. Thus, $\tilde u \in V(\Omega)$.

Now let $w \in V(\Omega)$ be arbitrary, and $w_\ell \in V_\ell$ its recovery sequence. Then
\[
\mathcal{E}(\tilde u) \le \liminf_{\ell \to \infty} \mathcal{E}_\ell(u_\ell) \le \limsup_{\ell \to \infty} \mathcal{E}_\ell(u_\ell) \le \limsup_{\ell \to \infty} \mathcal{E}_\ell(w_\ell) = \mathcal{E}(w).
\]
Since $w$ was arbitrary, we conclude that $\tilde u$ is a minimiser of $\mathcal{E}$, and hence $\tilde u = u$ by uniqueness. As all subsequences converge to $u$, the full sequence satisfies
\[
\widehat u_\ell \rightharpoonup u \quad \text{in } H^1(\Omega), \quad \text{and} \quad
\widehat u_\ell \to u \quad \text{in } L^2(\Omega).
\]
\end{proof}

\begin{remark}[Approximation by smooth functions] \label {smoothing}
Let $w \in V(\Omega)$ be arbitrary. By the elliptic regularity estimates we know that $w\in H^2 (\Omega). $ For each $\delta > 0$, standard density results imply the existence of a smooth function  $w_\delta \in C^m(\overline{\Omega})$, $m \geq 4$,  such that
\begin{equation}
\|w - w_\delta\|_{H^1(\Omega)} + \|\Delta w - \Delta w_\delta\|_{L^2(\Omega)} \lesssim \delta.
\end{equation}	
To see why this function can be chosen so that   $\restr{w_\delta}{\partial \Omega} = 0$, one has to follow the proof of 
Theorem 3 in \cite[Section 5.3]{Evans2} and use as mollifiers on the boundary convolutions with odd extension $\bar w (x) = w(x)$ if $x\in B^+$ and $\bar w (x) = - w(x_1, \dots , -x_d )$ if $x\in B^-$ where we follow the notation of  \cite[ Theorem 1, Section 5.4]{Evans2}. Then using the fact that $w=0$ on the boundary the arguments of  \cite[ Theorem 1, Section 5.4]{Evans2} show that 
$\bar w $ is smooth enough, odd with respect to $x_d$ and zero on the local boundary $x_d=0. $ It then follows that the corresponding mollifier is zero on the boundary. The property $|w_\delta|_{H^s(\Omega)} \lesssim \delta^{1-s}$ follows by the fact that 
the approximation constructed is a finite sum of convolutions. 
\end{remark}

\section{Computational Experiments}\label{CE}

We now evaluate the numerical performance and robustness of the proposed method. In particular, we demonstrate its ability to approximate minimisers of the continuous energy functional \eqref{energy_fun} with high accuracy, and highlight its robustness under mesh refinement and varying neural network architectures. Additionally, we show how the proposed finite element training approach significantly reduces both the computational cost and memory usage associated with gradient evaluations of neural network functions.

The finite element discretisations used in training are based on uniform triangulations. For instance, the unit square is divided into 9 equal squares, each of which is further subdivided into two triangles, yielding a total of 18 triangles as illustrated in Figure~\ref{fig:mesh}.

\begin{figure}[h!]
\centering
\includegraphics[width=8cm]{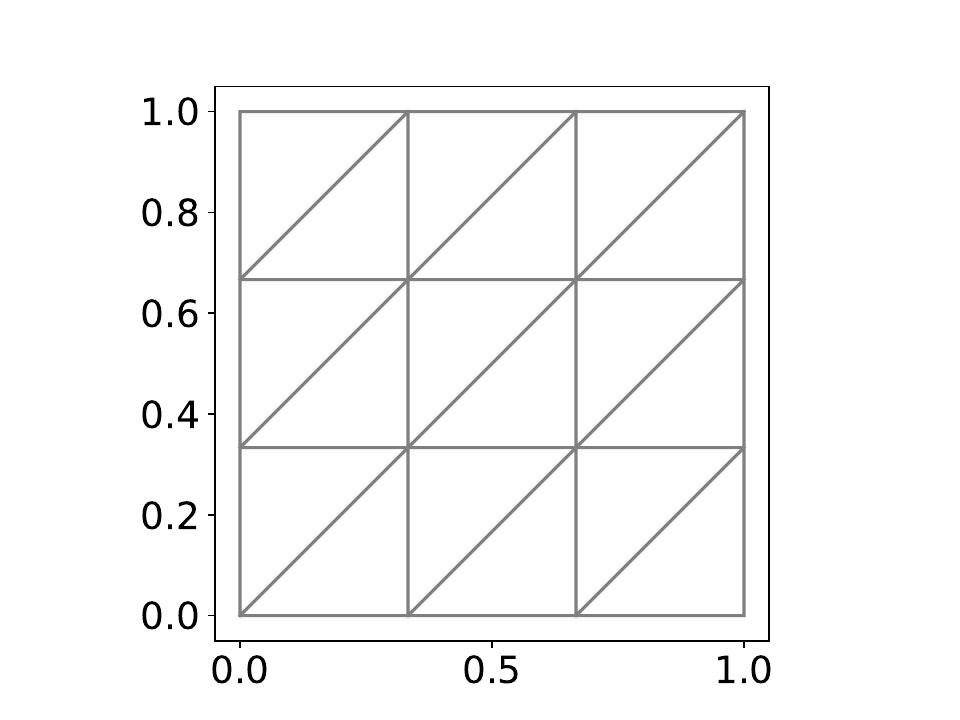}
\caption{Uniform mesh: The unit square is divided into $3 \times 3 = 9$ squares, each further subdivided into 2 triangles.}
\label{fig:mesh}
\end{figure}

Throughout this section, we use continuous piecewise polynomial finite element spaces of degree 2. The penalty parameter in the discrete energy functional \eqref{E_h} is fixed at $\alpha = 60$.

Recalling the residual neural network (ResNet) architecture defined in Equations~\eqref{C_L}, \eqref{C_k}, and \eqref{eq:blocks}, we restrict our experiments to two-dimensional domains $\Omega \subset \mathbb{R}^2$. The width of each hidden layer is fixed at $N = 64$, and we compare the approximation performance of networks with 2 and 4 residual blocks. The activation function used is $\tanh(\cdot)$, and optimisation is performed using the Adam algorithm.

All neural network components are implemented in the PyTorch framework \cite{paszke2019pytorch}, with parameters stored in \texttt{float32} precision. All computations were performed on an RTX A4500 GPU.

\subsection{Approximating Solutions in Convex Domains}

We begin by considering approximations on convex domains. Specifically, we choose the right-hand side $f$ in \eqref{SSe:1.1} such that the exact solution to Poisson's equation is given by
\begin{align}\label{eq:sol1}
u(x_1, x_2) = \sin(4\pi x_1)\sin(4\pi x_2), \qquad (x_1, x_2) \in \Omega = [0,1]^2.
\end{align}

To approximate minimisers of the discrete energy functional \eqref{E_h}, we employ a residual neural network with two blocks. We increase the resolution by refining the mesh (i.e., increasing the number of triangles) and simultaneously vary the precision of the integration rule used during training.

After $10^5$ training epochs using the Adam optimiser with learning rate $5 \times 10^{-3}$, the $L^2(\Omega)$ errors (solid curves) are reported in Figure~\ref{fig:blocks2}. The minimal observed error is approximately $9.3 \cdot 10^{-4}$, and the error generally decreases monotonically as the number of training points increases. One exception occurs for the integration rule with first-order precision, where the error increases from approximately $10^{-3}$ to $2 \cdot 10^{-3}$ (solid green curve).

Since $u_\theta \in C^2(\Omega)$, it is natural, both intuitively and from standard error estimates, to expect that the gradient jump terms 
$\jumpop{\nabla I_{\S_h} u_\theta}$ on each edge $e$ vanish as the discretisation parameter $h \to 0$. 
To explore the role of these terms, we repeat the preceding experiment while omitting the consistency term in \eqref{E_h}, i.e., we retain only the boundary contribution in the penalty term.

Figure~\ref{fig:blocks2} supports this expectation, see the dashed curves. Even with a small number of mesh cells, the error approaches its minimum value. 
This raises a natural question: what benefit, if any, is gained by including the gradient jump terms?

To better understand their role, we consider a test function designed to exhibit jump-like behaviour. Define
\begin{equation}\label{eq:1dstep}
u(x)= 
\begin{cases}
\sin(2\pi x), & \text{for } x \in \Omega_1 = [0, 0.5 - \epsilon],\\
\sin(2\pi x) - 1, & \text{for } x \in \Omega_2 = [0.5 + \epsilon, 1],
\end{cases}
\end{equation}
with a smooth transition in the intermediate region $\Omega_3 = [0.5 - \epsilon, 0.5 + \epsilon]$. Choosing $f = 4\pi^2 \sin(2\pi x)$ ensures that
$\| \varDelta u + f \|_{L^2(\Omega_1)} = \| \varDelta u + f \|_{L^2(\Omega_2)} = 0$. 
From standard approximation theory for neural networks, we know that there exists $\hat{u}_\theta \in V_\mathcal{N}$ such that
\[
\| \varDelta \hat{u}_\theta + f \|_{L^2(\Omega_1)} + \| \varDelta \hat{u}_\theta + f \|_{L^2(\Omega_2)} < \delta
\quad \text{for any } \delta > 0.
\]
This is also evident in practice, neural networks can approximate such step-like functions with high fidelity.

However, if the numerical integration rule does not place points in $\Omega_3$, the transition (or "jump") between $\Omega_1$ and $\Omega_2$ may go undetected. 
This allows discontinuous or piecewise solutions resembling \eqref{eq:1dstep} to persist undetected by the residual term alone.

Motivated by this, we examine the performance of deeper networks capable of representing such non-smooth features. Repeating the earlier experiment using a residual network 
with four blocks, we observe a marked degradation in $L^2$ error when the jump terms are omitted: the error increases by one to two orders of magnitude, compare the 
dashed curves in Figures~\ref{fig:blocks4} and \ref{fig:blocks2}. In particular, when using a second-order integration rule, the error grows from $\sim 10^{-3}$ to 
$\sim 10^{-2}$ as the number of training points increases (orange curve in Figure~\ref{fig:blocks4}), despite the associated loss decreasing monotonically (see Table~\ref{table:loss}).

By contrast, when minimising the full energy $\mathcal{E}_h$ (including jump terms), the $L^2$ errors are comparable to those obtained with the smaller architecture, again compare 
Figures~\ref{fig:blocks4} and \ref{fig:blocks2}. This illustrates that the proposed method offers improved robustness in that accurate solutions are obtained independently of the 
network depth or quadrature precision.

Standard neural network methods for solving PDEs are typically based on collocation training, see \cite[Section~1.4]{GrekasM_2025}. 
In this section, we compare the proposed finite element-based approach with standard Physics-Informed Neural Networks (PINNs), 
evaluating their accuracy, computational cost, and memory usage.

Following \cite{GrekasM_2025}, we focus on deterministic training via quadrature collocation, which has been shown to outperform 
Monte Carlo-based collocation in accuracy. Boundary conditions in the PINNs setup are imposed using Nitsche’s method, with the interpolant 
enforced only on the boundary. This hybrid treatment provides higher boundary accuracy than Monte Carlo sampling.

Using a residual network with two blocks, the $L^2$ error for quadrature collocation is of the order $10^{-4}$, comparable to the error 
obtained when omitting the gradient jump terms in our method. However, increasing the network depth to four blocks leads to error 
levels around $10^{-3}$ to $10^{-2}$, mirroring the instability observed when neglecting jump terms in Section~\ref{CE}.

Significant differences arise in computational cost\footnote{For quadrature collocation, the Laplacian is evaluated using forward-mode 
automatic differentiation, implemented via the \texttt{torch.func.hessian} function with vectorised mapping. This is more efficient in 
memory and runtime than reverse-mode differentiation.} and resource consumption. Figures~\ref{fig:timeb2} and~\ref{fig:timeb4} show the 
runtime ratios between quadrature collocation and the finite element method. For first-order quadrature rules, our method is 3-4 times faster; 
for second-order rules, the speedup increases to 5-11 times. In both network configurations (2 and 4 blocks), the advantage grows with the number 
of training points.

Memory usage follows a similar trend, as shown in Figures~\ref{fig:mbused1} and~\ref{fig:mbused2}. The finite element method exhibits 
significantly lower memory demands and slower memory growth, even for large training sets. In contrast, PINNs relying on automatic differentiation 
require substantially more memory, making them less practical for fine discretisations or deeper network architectures.

\begin{figure}[h!]
\includegraphics[width=\textwidth]{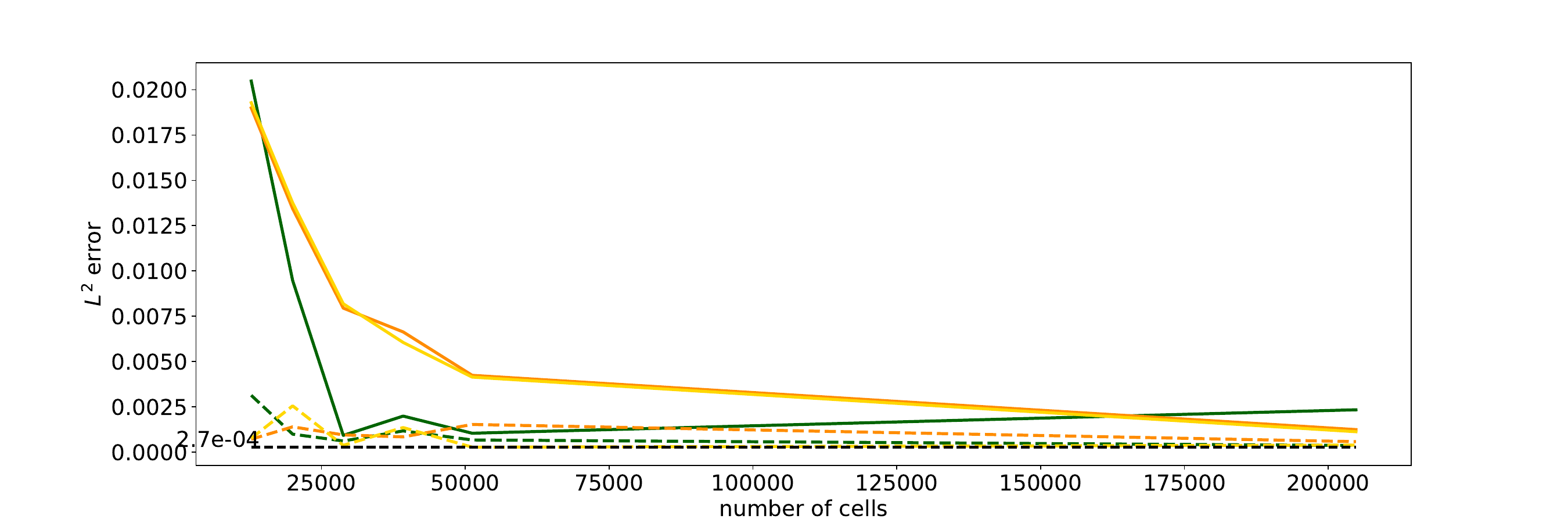}
\caption{Comparison of $L^2$-errors for solutions $u_\theta$ obtained using a residual neural network with two blocks. 
We minimise the full energy functional \eqref{E_h} (solid lines) and a reduced variant without gradient jump terms (dashed lines). 
The error $||u_\theta - u||_{L^2(\Omega)}$ is plotted against the number of mesh cells, where $u$ is the exact solution \eqref{eq:sol1}. 
Green, orange, and yellow curves correspond to quadrature rules of precision 1, 2, and 3, respectively.}
\label{fig:blocks2}
\end{figure}

\begin{figure}[h!]
\includegraphics[width=\textwidth]{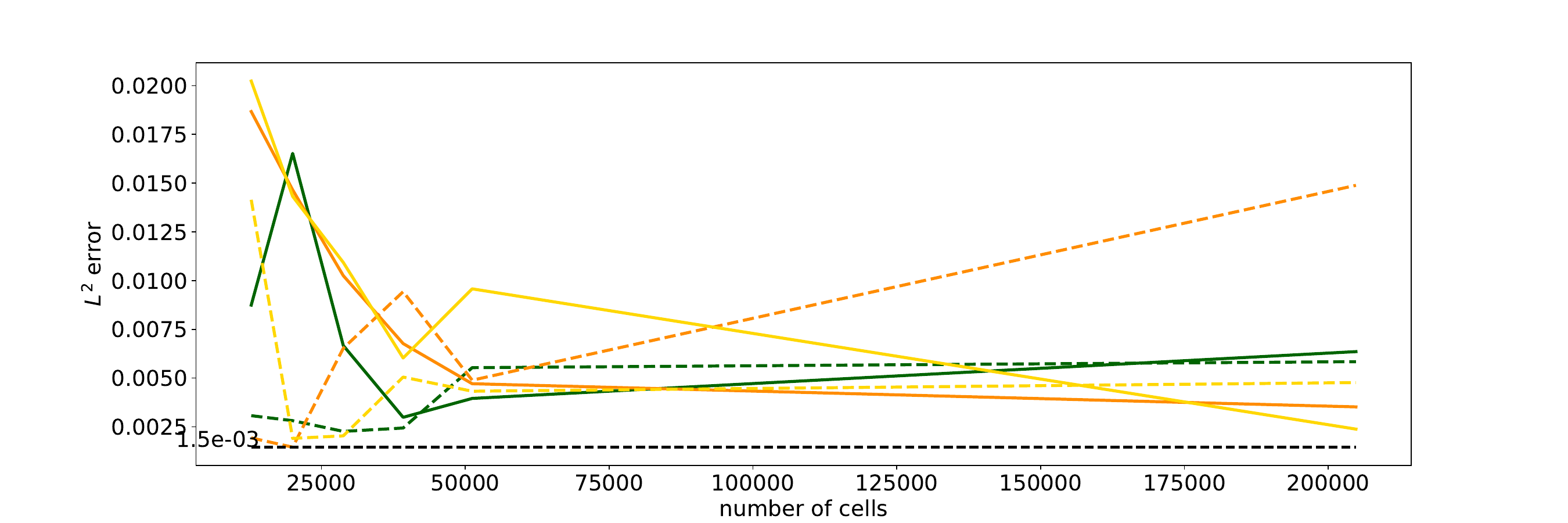}
\caption{As in Figure~\ref{fig:blocks2}, but using a residual neural network with four blocks. 
The $L^2$-error $||u_\theta - u||_{L^2(\Omega)}$ is shown for both the full energy \eqref{E_h} (solid lines) and the variant without gradient jump terms (dashed lines). 
The same integration rule precisions (1, 2, and 3) are indicated by the green, orange, and yellow curves, respectively.}
\label{fig:blocks4}
\end{figure}

\begin{table}[h!]
\centering
\begin{tabular}{||c c c c c c c||} 
 \hline
 Number of cells: & 12800 & 20000 & 28800 & 39200 & 51200 & 204800 \\ [0.5ex] 
 \hline
 Best loss:  & 76.9 & 49.4 & 34.3 & 25.4 & 19.5 & 5.1 \\ [1ex] 
 \hline
\end{tabular}
\caption{Minimum value of the discrete loss during training when gradient jump terms are omitted from \eqref{E_h}, using a quadrature rule of precision 2. As the number of mesh cells increases, the loss decreases steadily.}
\label{table:loss}
\end{table}

\begin{figure}[h!]
\includegraphics[width=\textwidth]{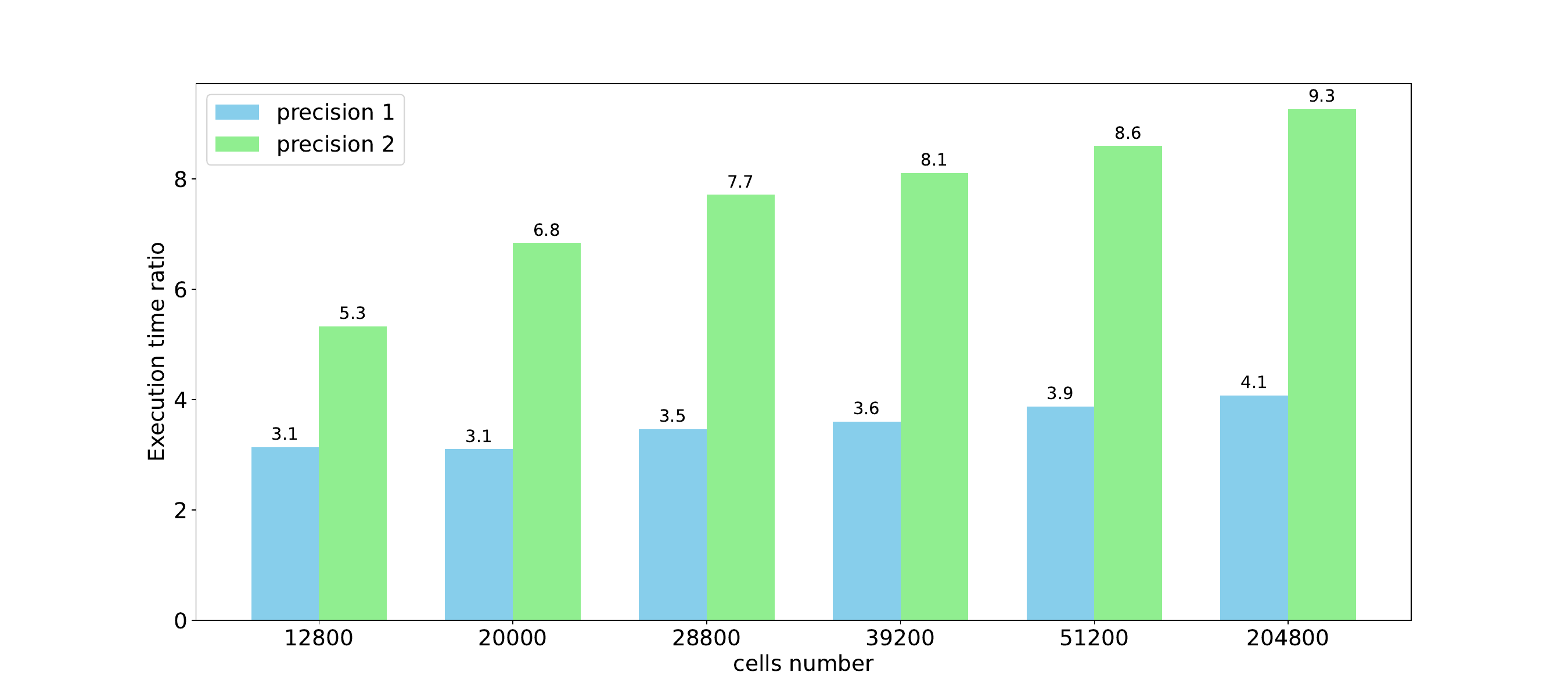}
\caption{Execution time ratio over 1,000 training iterations comparing the proposed finite element method against quadrature-based collocation, 
using a residual neural network with 2 blocks. As the number of mesh cells and integration precision increase, the finite element method becomes faster, 
achieving speedups ranging from 3.1× to 9.3×.}
\label{fig:timeb2}
\end{figure}

\begin{figure}
\includegraphics[width=\textwidth]{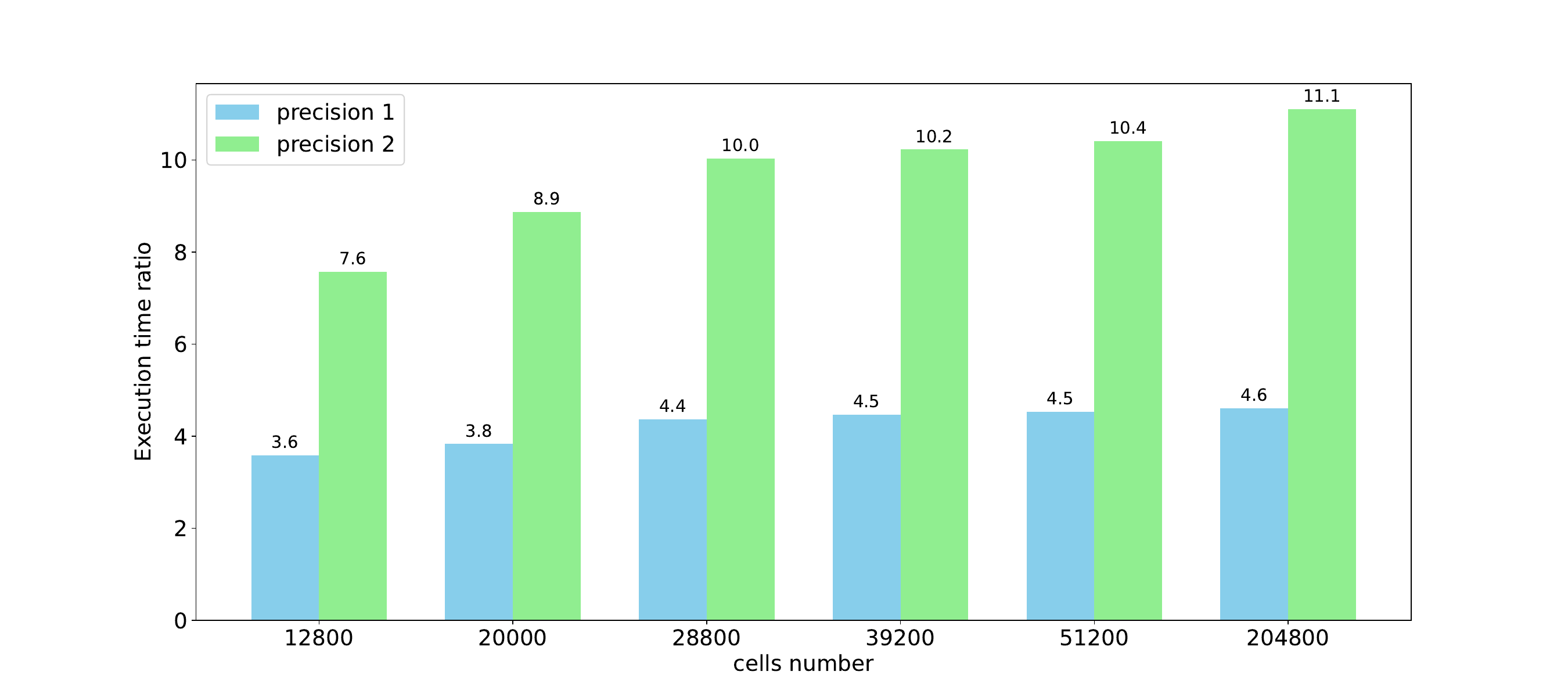}
\caption{As in Figure~\ref{fig:timeb2}, but using a residual neural network with 4 blocks. The proposed finite element method achieves speedups ranging from 3.6× to 11.1× compared to the collocation quadrature approach.}
\label{fig:timeb4}
\end{figure}

\begin{figure}[h!]
\includegraphics[width=\textwidth]{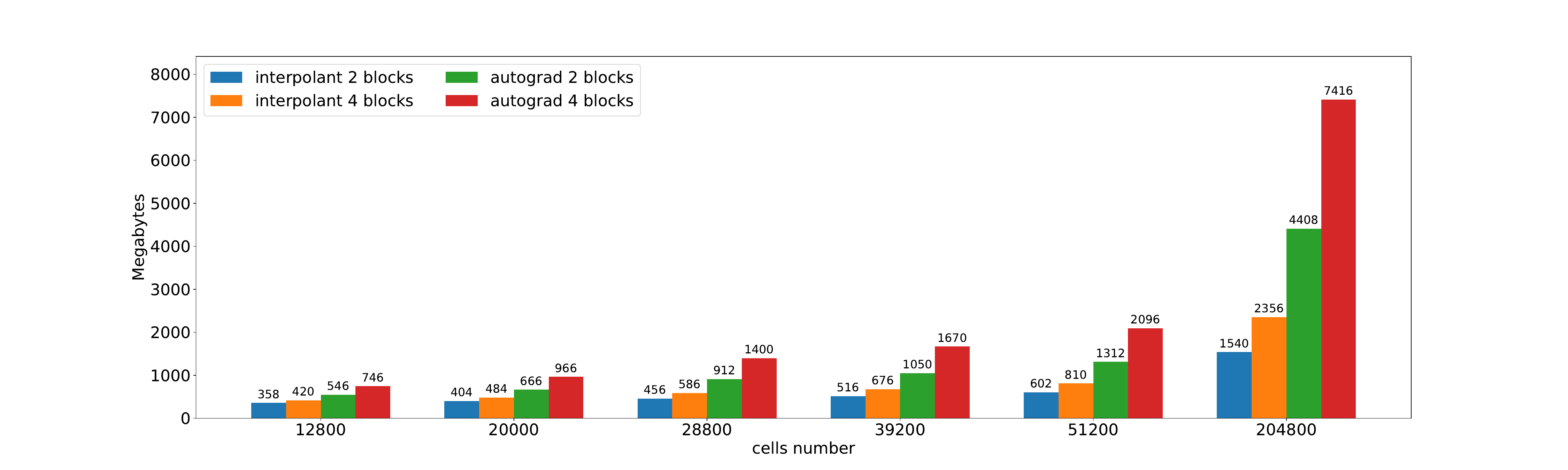}
\caption{Memory usage during training with numerical integration of precision 1, comparing the proposed method using finite element interpolation 
(interpolant) against standard automatic differentiation (autograd). Results are shown for residual neural networks with 2 and 4 blocks.}
\label{fig:mbused1}
\end{figure}

\begin{figure}[h!]
\includegraphics[width=\textwidth]{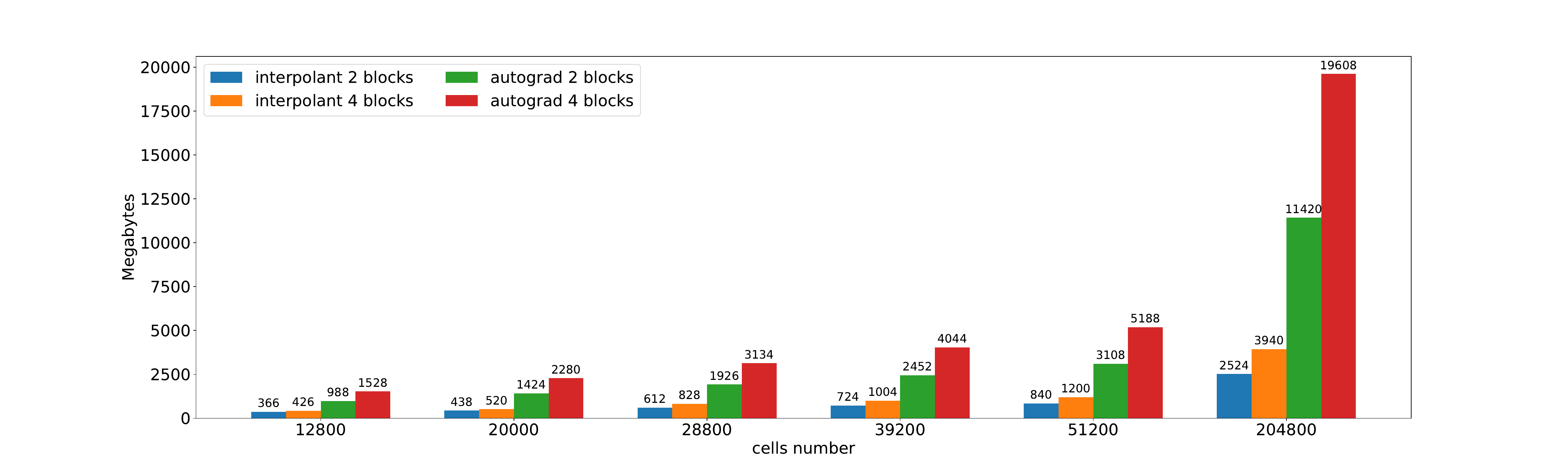}
\caption{Memory usage during training with numerical integration of precision 2, comparing the proposed method using finite element interpolation 
(interpolant) against standard automatic differentiation (autograd). Results are shown for residual neural networks with 2 and 4 blocks.}
\label{fig:mbused2}
\end{figure}

\clearpage

\subsection{Approximating solutions in nonconvex domains}

In this computational experiment, we demonstrate the robustness of training using finite elements in nonconvex domains exhibiting boundary singularities. 
Specifically, we consider the classical L-shaped domain,
\[
\Omega = (-1,1)^2 \setminus \left([0,1) \times (-1, 0]\right).
\]
In Poisson's equation~\eqref{ivp-AC}, we select the forcing term $f$ such that the exact solution is given by
\begin{equation}\label{eq:LshapedSol}
u(x,y) = (x^2 + y^2)^{\frac{2}{3}}.
\end{equation}
This solution satisfies $u \in H^1(\Omega)$ but exhibits a singularity in the Laplacian $\varDelta u$ at the reentrant corner $(0,0)$.

Using a uniform triangulation of the domain as illustrated in Figure~\ref{fig:LshapedMesh}, we approximate $u$ via the proposed scheme~\eqref{E_h}. 
Figure~\ref{fig:abs_diff} displays the pointwise absolute error $|u - u_\theta|$ for increasing mesh resolution, using a residual network with 2 blocks 
and a first-order quadrature rule. As expected, the error is concentrated near the singularity at the origin. 
Nevertheless, the $L^2$-error decreases steadily as the number of triangles increases, dropping from approximately $6 \cdot 10^{-2}$ 
with 364 cells to $9 \cdot 10^{-3}$ with 98,304 cells; see Table~\ref{table:LshapeError}.

\begin{figure}[h!]
\includegraphics[width=7cm]{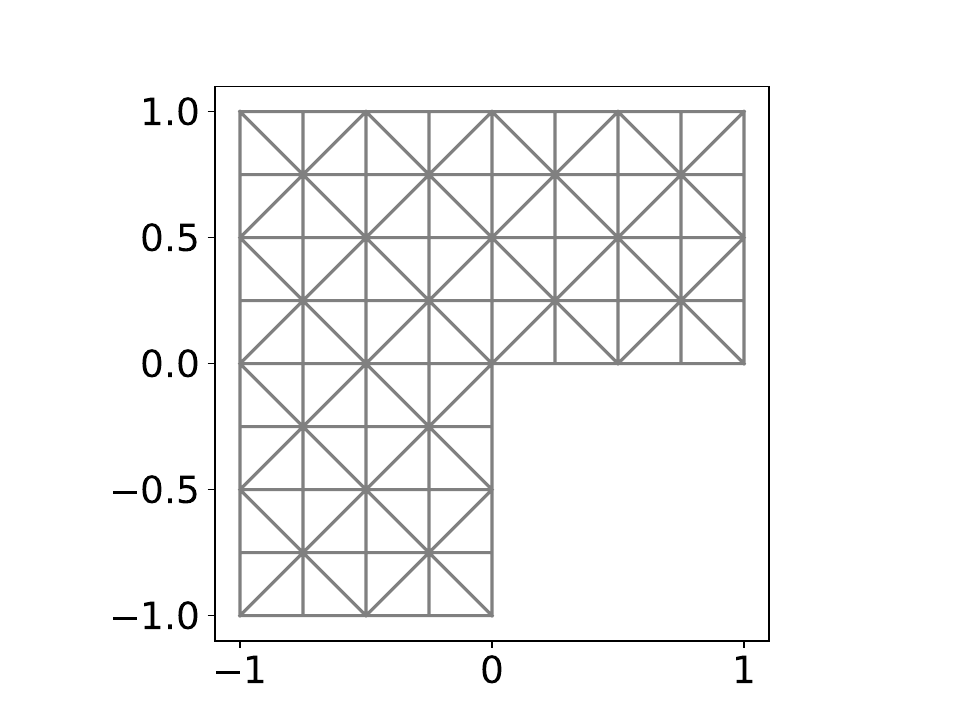}
\caption{Triangulation of the L-shaped domain. The domain is subdivided into 12 rectangles, each of which is further divided into 8 triangles.}
\label{fig:LshapedMesh}
\end{figure}

\begin{table}[h!]
\centering
\begin{tabular}{||c c c c c c||} 
 \hline
 Discretisation parameter $h$: & $\frac{1}{4}$ & $\frac{1}{8}$  & $\frac{1}{16}$ & $\frac{1}{32}$ & $\frac{1}{64}$ \\ [0.5ex] 
 \hline
 $|| u - u_\theta||_{L^2(\Omega)}$: & 0.063 & 0.036 & 0.021 & 0.020 & 0.0091 \\ [1ex] 
 \hline
\end{tabular}
\caption{Relation between the discretisation parameter $h$ and the $L^2(\Omega)$-error for the L-shaped domain example. 
Here, $u$ is the exact solution given in \eqref{eq:LshapedSol}, and $u_\theta$ is the computed minimiser.}
\label{table:LshapeError}
\end{table}

\begin{figure}
\centering
\includegraphics[width=0.45\linewidth]{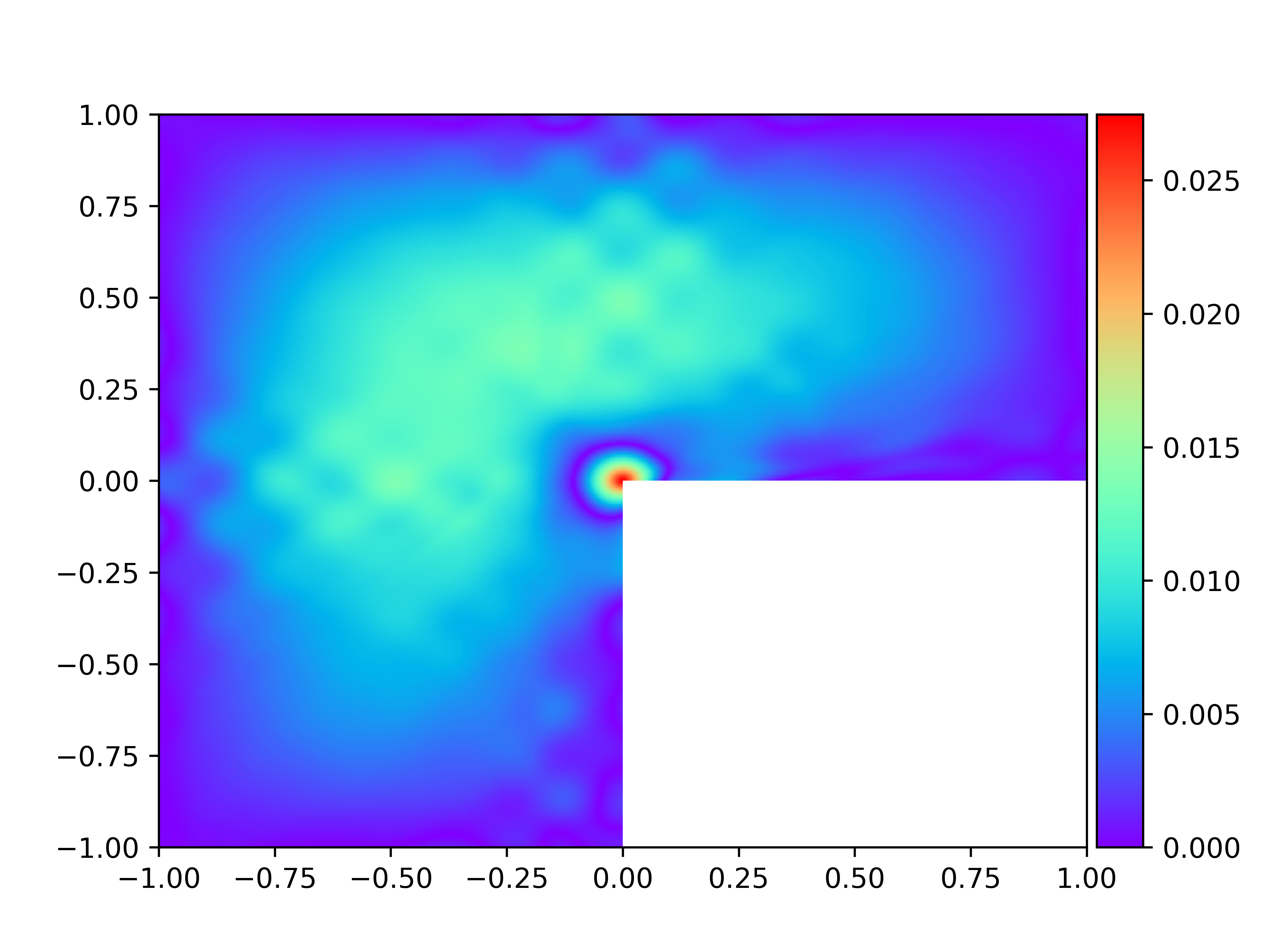}
\includegraphics[width=0.45\linewidth]{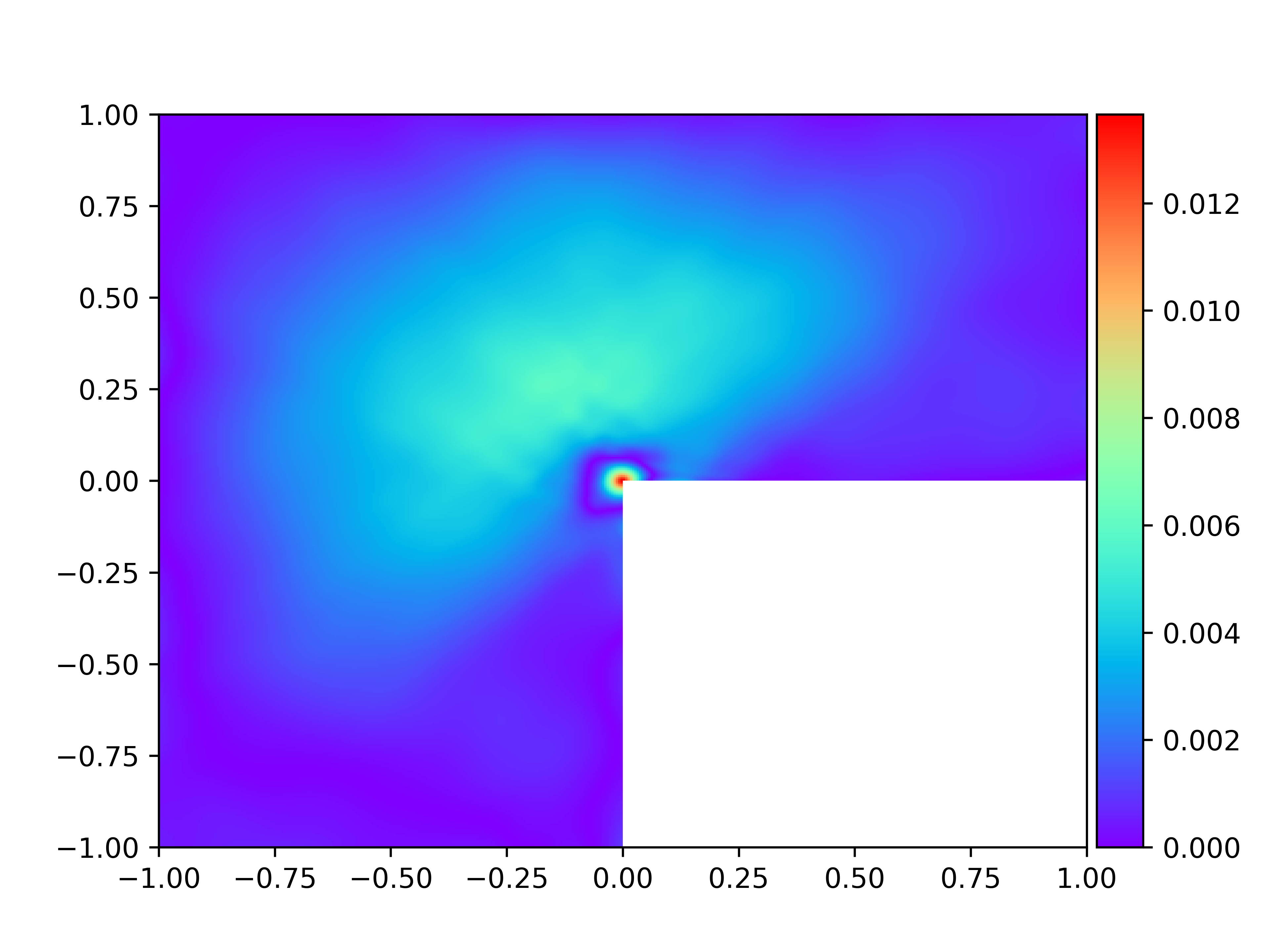}
\includegraphics[width=0.45\linewidth]{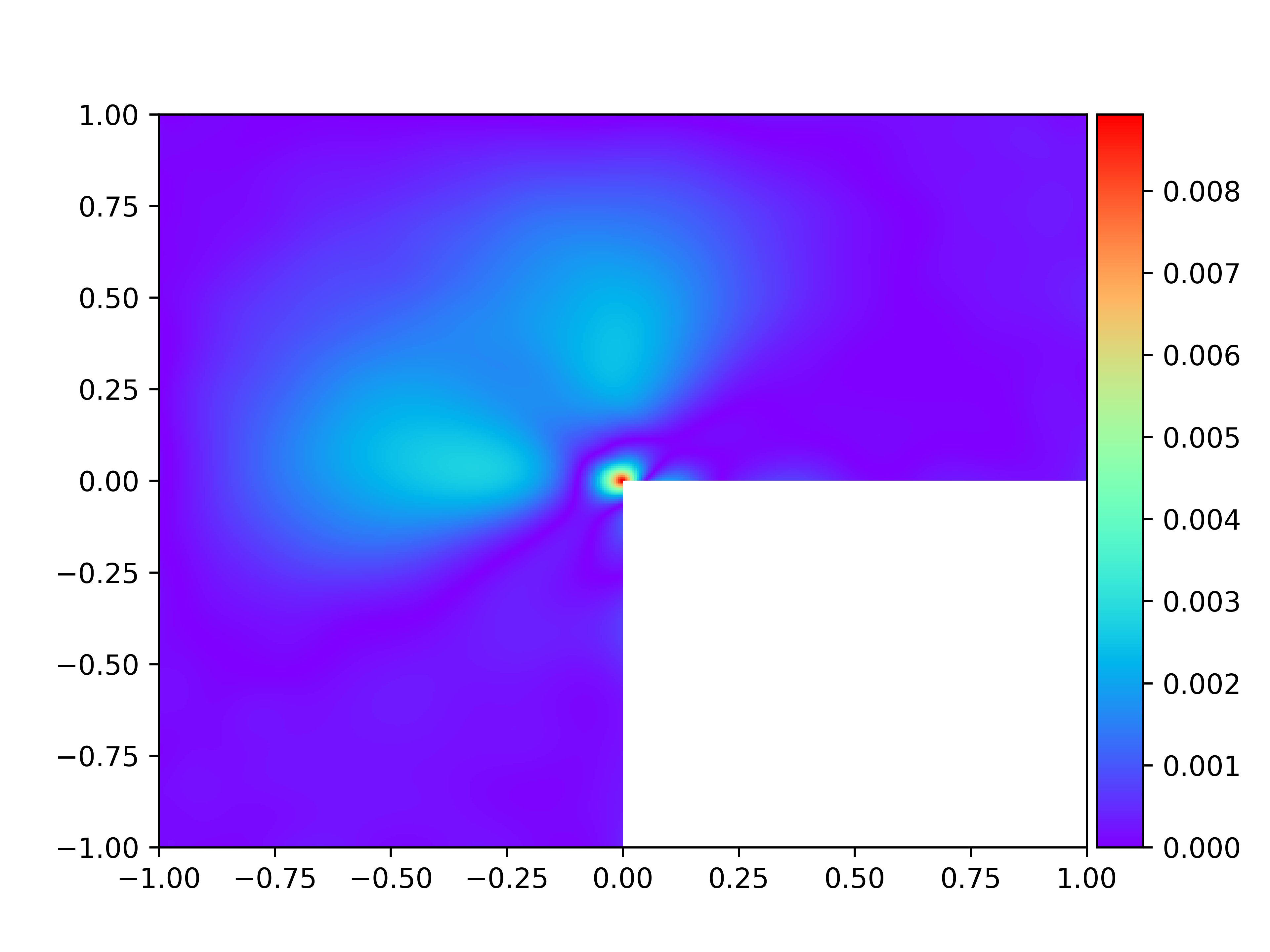}
\includegraphics[width=0.45\linewidth]{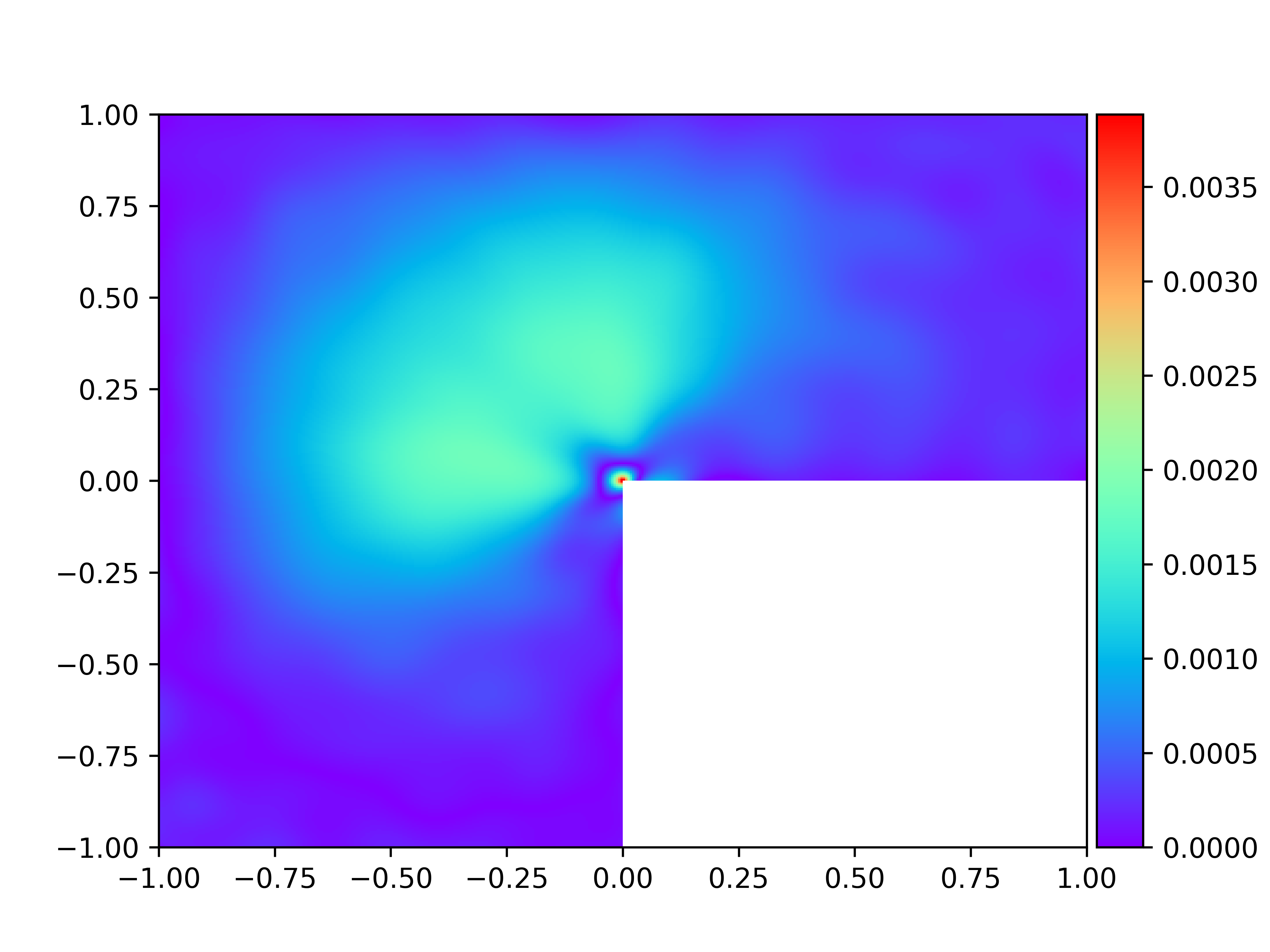}
\caption{Pointwise absolute error $|u_\theta - u|$ for varying numbers of triangles. Top row: 384 triangles ($h=1/4$), 1536 triangles ($h=1/8$); bottom row: 6144 triangles ($h=1/16$), 24576 triangles ($h=1/32$). A residual network with two blocks and numerical integration of first-order precision is used.}
\label{fig:abs_diff}
\end{figure}

\section{Conclusion}

We have introduced and analysed a finite element-based training approach for physics-informed neural networks, with a focus on energy minimisation problems. The proposed method achieves high accuracy, improved computational efficiency, and robustness across different network architectures and problem geometries. In particular, the inclusion of consistency terms involving gradient jumps was shown to enhance stability in the presence of non-smooth solutions or under-resolved features. Numerical experiments confirm the method’s competitive performance compared to standard collocation-based PINNs, especially in terms of memory usage and training time. These results support the viability of finite element surrogates as a reliable and scalable framework for solving PDEs with neural networks.

\clearpage

\printbibliography
\end{document}